\documentclass[11pt]{article}
\usepackage[english]{babel}
\usepackage{ctex} 
\usepackage{amsfonts}
\usepackage{mathrsfs}
\usepackage{bbm}
\usepackage{amsfonts}
\usepackage{multirow}
\usepackage{amssymb,amsmath,graphicx}
\usepackage{float}
\usepackage[colorlinks=true]{hyperref}
\usepackage{tikz}
\usepackage{setspace}
\usepackage{fancyhdr} 
\usepackage{textcase}       
\usepackage{mathrsfs}
\usepackage{amsmath,amssymb}
\usepackage{amsthm}
\usepackage{mathtools} 
\usepackage{amssymb} 
\usepackage{bbm} 
\usepackage{enumitem}
\usepackage{bm}
\usepackage{hyperref}
\usepackage{cite} 
\usepackage{url}
\usepackage{algorithm2e} 
\hypersetup{urlcolor=blue, citecolor=red}
\usepackage{pifont}
\usepackage[perpage,symbol*,bottom]{footmisc}
\DefineFNsymbols{circled}{{\ding{172}}{\ding{173}}{\ding{174}}
{\ding{175}}{\ding{176}}{\ding{177}}{\ding{178}}{\ding{179}}{\ding{180}}{\ding{181}}}
\setfnsymbol{circled}
\usepackage{scrextend}
\hypersetup{hidelinks}
\usepackage{longtable}
\usepackage{tikz}
\usepackage{array} 
\usepackage{caption} 
\usepackage{booktabs}
\usetikzlibrary{trees}

\allowdisplaybreaks[1]

\allowdisplaybreaks
\usepackage{cleveref} 
\crefname{theorem}{Theorem}{Theorems}
\crefname{table}{Table}{Tables} 
\crefname{lemma}{Lemma}{Lemmas}
\crefname{definition}{Definition}{Definitions}
\begin{document}
\title{\textbf{A fractional attraction-repulsion chemotaxis system with generalized logistic source and nonlinear productions}
}\author{Liyan Song$^{1}$, Qingchun Li$^{1}$, Chengyuan Qu$^{2,*}$
\footnotemark[0]\\
\footnotesize $^{1}$School of Mathematical Sciences, Dalian University of Technology,
Dalian 116024, P. R. China \\
\footnotesize $^{2}$School of Mathematical Sciences, Dalian Minzu University,
Dalian 116600, P. R. China}
\footnotetext[0]{*Corresponding author. {\it E-mail}:  mathqcy@163.com}
\date{}

\maketitle
{\bf Abstract }
This paper studies a fractional attraction-repulsion system with generalized logistic source and nonlinear productions:
\begin{equation*}
\left\{
\begin{aligned}
&u_t = -(-\Delta)^\alpha u - \chi_1 \nabla \cdot (u \nabla v) + \chi_2 \nabla \cdot (u \nabla w) + au - bu^\gamma, &x \in \mathbb{R}^N, \, t > 0, \\
&0 = \Delta v - \lambda_1 v + \mu_1 u^k, &x \in \mathbb{R}^N, \, t > 0, \\
&0 = \Delta w - \lambda_2 w + \mu_2 u^k, &x \in \mathbb{R}^N, \, t > 0.
\end{aligned}
\right.
\end{equation*}\\
We first establish the global boundedness of classical solutions with nonnegative bounded and uniformly continuous initial data in two different cases: $\gamma \geq k + 1$ and $\gamma < k + 1$, respectively. Next, we show the asymptotic behavior of the global solutions for both cases $\gamma = k + 1$ and $\gamma \neq k + 1$. Finally, we obtain the spreading speed of solutions. In particular, when $\gamma = k + 1$, the upper bound of the spreading speed increases monotonically with $k$. If the condition of balanced attraction-repulsion intensities is further specified, the spreading speed will be equal to $\frac{a}{N + 2\alpha}$.

\medskip
\noindent {\textit{Keywords:}} Fractional chemotaxis system; Generalized logistic source; Nonlinear productions; Global boundedness; Spreading speeds.\\

\newtheoremstyle{mytheoremstyle}
{3pt}
{3pt}
{\upshape}
{0pt}
{\bfseries}
{}
{.5em}
{}

\renewcommand{\proofname}{\textup{Proof}}
\theoremstyle{plain}
\newtheorem{theorem}{Theorem}[section] 
\newtheorem{lemma}{Lemma}[section] 
\newtheorem{definition}[lemma]{Definition} 
\newtheorem{proposition}{Proposition}[section]
\newtheorem{corollary}{Corollary}[section]
\newtheorem{remark}{Remark}
\renewcommand{\theequation}{\thesection.\arabic{equation}}
\catcode`@=11 \@addtoreset{equation}{section} \catcode`@=12
\maketitle{}

\section{Introduction}
In biological systems, the collective behavior of cells is regulated by distinct mechanisms including attraction-repulsion chemotaxis, fractional-order dynamics, generalized logistic source and nonlinear production. Among these, the dual role of attraction-repulsion chemotaxis serves as a key mechanism for preventing excessive aggregation. As demonstrated by Luca et al. (2003) \cite{LUCA} in the context of Alzheimer's disease, microglial cells are influenced by the chemotactic attraction toward $\beta$-amyloid and repulsive signals from TNF-$\alpha$. When the attraction strength falls below the repulsion threshold, amyloid plaque formation is inhibited. Building on a chemotaxis model incorporating chemoattractant-repellent dynamics, Quinlan \& Straughan (2005) \cite{QB} employed nonlinear stability analysis to identify threshold conditions for microglial aggregation such as the absence of amyloid plaques below a critical cell density. This model was validated through comparative analysis of microglial density in Alzheimer's versus healthy brains, confirming its robustness and supporting its relevance in explaining senile plaque formation in Alzheimer's disease.

Compared with traditional dynamical models, fractional-order dynamics exhibits significant advantages in addressing memory-dependent and non-instantaneous response problems. For instance, in the movement of microbial populations, it can accurately characterize the delayed perception of chemical signals by cells, providing crucial support for revealing the intrinsic laws of this process. Garfinkel et al. (2004) \cite{AYD} showed that the movement of mesenchymal cells in response to certain chemical attractants deviates from classical chemotaxis models. These models fail to explain the experimental observation that vascular mesenchymal cells can aggregate into one-dimensional structures in two-dimensional settings. Escudero (2006)\cite{EC} further highlighted that many organisms (e.g., microzooplankton, soil amoebae) adopt Lévy flights (super diffusive behavior) as a movement strategy, which cannot be described by classical random-walk models.

To quantify the ``aggregation-inhibition'' dynamics and derive the threshold conditions for biological aggregation, generalized logistic sources and nonlinear production functions provide reliable tool support. Based on the core logic of these tools, in plant root growth, Richards extended the growth equation originally developed by Von Bertalanffy to better fit empirical plant data \cite{RF}, accommodating S-shaped growth curves across diverse plant morphologies. Moreover, the balance between chemotactic attraction toward water and inhibitory effects due to rhizosphere competition enables precise prediction via thresholds derived from generalized logistic sources of whether root systems will become overly dense and lead to nutrient deficiency. The seminal work by Budrene \& Berg (1991) \cite{EH} first demonstrated that Escherichia coli can self-organize into distinct spatial patterns (e.g., spots and stripes) during proliferation, while systematically characterizing their morphological features and dynamic responses to substrate concentration gradients. Models incorporating nonlinear signal generation are commonly employed to elucidate the formation mechanisms of biological patterns, which further demonstrates that nonlinear production is indispensable for constructing biologically plausible models.

The present paper focuses on the analysis of a fractional attraction-repulsion chemotaxis system involving nonlinear signal production mechanisms and generalized logistic source
\begin{equation}\label{1.1} 
\left\{
\begin{aligned}
&u_t = -(-\Delta)^\alpha u - \chi_1 \nabla \cdot (u \nabla v) + \chi_2 \nabla \cdot (u \nabla w) + au - bu^\gamma, \quad &x \in \mathbb{R}^N, \, t > 0, \\
&0 = \Delta v - \lambda_1 v + \mu_1 u^k, \quad &x \in \mathbb{R}^N, \, t > 0, \\
&0 = \Delta w - \lambda_2 w + \mu_2 u^k, \quad &x \in \mathbb{R}^N, \, t > 0,
\end{aligned}
\right.
\end{equation}
where $N \geq 1$, $\gamma > 1$, $\alpha \in ( \frac{1}{2}, 1 )$, $k\geq1$, $\chi_i \geq 0\ (i = 1,2)$ and $a, b, k, \mu_1, \lambda_2, \mu_2$ are positive constants. Here $u(x,t)$ denotes the mobile species concentration, while $v(x,t)$ and $w(x,t)$ represent the chemical concentrations of the attractant and repellent, respectively. $\chi_1$ and $\chi_2$ measure the chemotactic sensitivity of the mobile species. $\lambda_1$ and $\lambda_2$ describe the degradation rates of chemical substances and $\mu_1$, $\mu_2$ represent the production rates of chemical substances by the mobile species.

Since the groundbreaking work of Keller and Segel in the 1970s \cite{KS,KSS}, the classical chemotaxis system has been widely studied and extended \cite{NAYM,Y,YH,M,GJ,D,K}. In numerous biological processes, general mechanisms in cells include not only chemoattractants but also chemorepellents, which can be expressed as the following attraction-repulsion chemotaxis system
\begin{equation}\label{1.2}
\left\{
\begin{aligned}
&u_t = \Delta u - \chi_1 \nabla \cdot (u \nabla v) + \chi_2 \nabla \cdot (u \nabla w) + f(u), & x \in \Omega, \, t > 0, \\
&0 = \Delta v - \lambda_1 v + \mu_1 u, & x \in \Omega, \, t > 0, \\
&0 = \Delta w - \lambda_2 w + \mu_2 u, & x \in \Omega, \, t >0,
\end{aligned}
\right.
\end{equation}\\
where $\Omega \subset \mathbb{R}^N (N \geq 1)$ is a smooth bounded domain. For the case of $f(u) \leq au - b u^2$, Zhang \& Li (2016)\cite{ZL} proved the existence of a unique global solution under appropriate conditions. When $f(u) \leq a-bu^{\gamma}$, Xie \& Xiang (2016)\cite{LX} showed that \eqref{1.2} has a globally bounded classical solution for any nonnegative initial function $u_0 \in W^{1,\infty}(\Omega)$ if appropriate assumptions hold. Moreover, many researchers have adopted nonlinear signal production $\mu u^k$ over the linear form $\mu u$ to better model chemotactic cell migration in response to chemical gradients \cite{EM,EK,EKK}. Hong et al. (2020)\cite{LMS} showed the existence of a globally bounded classical solution by analyzing different cases depending on the relationship between $\gamma$ and $k$. In particular, Zhou et al. (2022)\cite{XZJ} studied the asymptotic behavior of globally bounded solutions with $f(u) =au-bu^{\gamma}$ when $\gamma=k+1 \geq \frac{2}{N}$.
For $\Omega = \mathbb{R}^N$, Salako \& Shen (2019) \cite{BW} investigated the asymptotic stability and propagation dynamics of solutions to \eqref{1.2} when $f(u) = au - bu^2$. However, only limited results have been obtained for the Cauchy problem with both generalized logistic source and nonlinear production term in \eqref{1.2} in the whole space. As far as we know, Hassan et al. (2025)\cite{HZWY} studied the chemotaxis-consumption system with logistic term $au - bu^\gamma$ and established the global boundedness of classical solution.

With the in-depth investigation of chemotaxis, the introduction of fractional-order effects into chemotaxis models has attracted significant research attention. In this context, many researchers have studied the following fractional attraction-repulsion chemotaxis system
\begin{equation}\label{1.3}
\left\{
\begin{aligned}
&u_t = -(-\Delta)^\alpha u - \chi_1 \nabla \cdot (u \nabla v) + \chi_2 \nabla \cdot (u \nabla w) + u(a - bu), & x \in \mathbb{R}^N, \, t > 0, \\
&0 = \Delta v- \lambda_1 v + \mu_1 u, & x \in \mathbb{R}^N, \, t > 0, \\
&0 = \Delta w- \lambda_2 w + \mu_2 u, & x \in \mathbb{R}^N, \, t > 0.
\end{aligned}
\right.
\end{equation}
Zhang et al. (2019)\cite{ZZL} studied the global boundedness of classical solutions and the asymptotic stability of positive constant equilibria when $w\equiv 0$. Jiang et al. (2022)\cite{JZZ} obtained the global well-posedness of the solution to \eqref{1.3} when $\chi_2\mu_2 < \chi_1\mu_1$ and $f(u)=0$. In particular, Jiang et al. (2023)\cite{JLL} established the global existence, asymptotic behavior and spreading properties of global classical solutions for \eqref{1.3}.

Inspired by \cite{BW,JLL,XZJ,LMS,SX,ZZL}, we investigate a fractional attraction-repulsion chemotaxis system with generalized logistic source and nonlinear production of \eqref{1.1} in the whole space. The core objective of this study is to conduct an in-depth analysis of three key mechanisms in the chemotaxis process (attraction-repulsion, fractional-order dynamics, generalized logistic source and nonlinear production) and show their regulatory effects on the behavior of solutions to \eqref{1.1}. Now, we present the main results as follows.

Throughout this paper, let
\begin{align*}
C_{unif}^b(\mathbb{R}^N) = \{ u \in C(\mathbb{R}^N) \mid u(x) \text{ is uniformly continuous in } x \in \mathbb{R}^N \text{and} \sup_{x \in \mathbb{R}^N} |u(x)| < \infty \}
\end{align*}
provided with the norm $\| u \|_{L^\infty} = \sup_{x \in \mathbb{R}^N} | u(x) |$.

First, we give the local existence and uniqueness of classical solutions to \eqref{1.1} with nonnegative initial data \( u_0 \in C_{unif}^b \left( \mathbb{R}^N \right) \) by using the contraction mapping theorem \cite{JLL}. We omit the proof here.
\begin{proposition}\label{pro1.1}
Let $\gamma > 1$, $k\geq1$, $\alpha \in ( \frac{1}{2}, 1)$ and $0 \leq u_0 \in C_{unif}^b \left( \mathbb{R}^N \right)$. Then there exist a maximal existence time $T_{\max}$ and a unique nonnegative classical solution $(u, v, w)$ on $[0, T_{\max})$ such that $\lim_{t \to 0^+} u(\cdot, t) = u_0$ and
\[
u \in C \left( [0, T_{\max}), C_{unif}^b \left( \mathbb{R}^N \right) \right) \cap C^1 \left( (0, T_{\max}), C_{unif}^b \left( \mathbb{R}^N \right) \right).
\]
Moreover, if $T_{\max} < \infty$, then
\[
\limsup_{t \to T_{\max}} \| u(\cdot, t) \|_{L^\infty} = \infty.
\]
\end{proposition}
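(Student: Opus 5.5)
We reduce \eqref{1.1} to a single nonlocal parabolic equation for $u$ and apply the contraction mapping theorem to its mild formulation, following \cite{JLL}. Since $\lambda_1,\lambda_2>0$, the elliptic equations are solved by $v=\mu_1(\lambda_1-\Delta)^{-1}u^k$ and $w=\mu_2(\lambda_2-\Delta)^{-1}u^k$. These are convolutions with Bessel kernels $G_{\lambda_i}\in L^1(\mathbb{R}^N)$ whose gradients also lie in $L^1$. Hence
\[
\|\nabla v\|_{L^\infty}\le C\|u\|_{L^\infty}^k,\qquad \|\nabla (v(u_1)-v(u_2))\|_{L^\infty}\le C k\max_i\|u_i\|_{L^\infty}^{k-1}\|u_1-u_2\|_{L^\infty},
\]
and similarly for $w$. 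For a given $u\in C([0,T],C^b_{unif})$, $\nabla v$ and $\nabla w$ are bounded and uniformly continuous. Let $T(t)=e^{-t(-\Delta)^\alpha}$, whose kernel $P_\alpha$ satisfies $\|P_\alpha(t)\|_{L^1}=1$ and $\|\nabla P_\alpha(t)\|_{L^1}\le Ct^{-1/(2\alpha)}$. The mild formulation is
\[
u(t)=T(t)u_0+\int_0^t \nabla T(t-s)\cdot\big(-\chi_1 u\nabla v+\chi_2 u\nabla w\big)(s)\,ds+\int_0^tT(t-s)\big(au-b|u|^{\gamma-1}u\big)(s)\,ds.
\]
The nonlinearity is written as $|u|^{\gamma-1}u$, with $u^k$ replaced by $|u|^k$, to keep it locally Lipschitz before nonnegativity is known. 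Because $\alpha>\frac12$, the singularity $(t-s)^{-1/(2\alpha)}$ is integrable. The drift term is therefore bounded by $CT^{1-1/(2\alpha)}R^{k+1}$, with a matching Lipschitz estimate, on the ball $\{\|u\|\le R\}$ where $R=2\|u_0\|_{L^\infty}$. The reaction term contributes at most $CT(R+R^\gamma)$ in the same way. For $T$ small this map sends the ball of $C([0,T],C^b_{unif})$ to itself and is a contraction. Uniform continuity in $x$ is preserved because convolution with $L^1$ kernels preserves $C^b_{unif}$. This yields a unique mild solution. Since $T$ depends only on $\|u_0\|_{L^\infty}$, we can restart from $u(T)$ and extend up to a maximal time $T_{\max}$. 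If $T_{\max}<\infty$ and $\|u\|_{L^\infty}$ stayed bounded, a uniform step length would let us continue past $T_{\max}$. This gives the blow-up criterion.

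Next we prove regularity, so that the mild solution is classical and $u\in C^1((0,T_{\max}),C^b_{unif})$. We bootstrap using the analytic semigroup generated by $-(-\Delta)^\alpha$ on $C^b_{unif}$. First we get Hölder continuity in $t$ of the integrals via the estimates $\|(-\Delta)^{\beta}T(t)\|\le Ct^{-\beta/\alpha}$. Next we show that $u(t)$ lies in a fractional power space, for instance $D((-\Delta)^{\beta})$ with $\beta$ slightly above $\frac12$, so that $u$ is $C^{1+\theta}$ in space. Elliptic Schauder theory then gives $v,w\in C^{2+\theta}$, so $\nabla\cdot(u\nabla v)$ is Hölder continuous. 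Standard results on inhomogeneous Cauchy problems with Hölder forcing, as in Pazy or Henry, then give differentiability in $t$ and a strong, hence classical, solution. When $k$ is not an integer, $|u|^k$ is only $C^1$ in $u$ because $k\ge1$. This still suffices for Hölder continuity of $\nabla v$. The main obstacle is this regularity step. It requires showing that $u$ is regular enough in space for the divergence terms to make sense pointwise, while working in $C^b_{unif}$, which is not reflexive. I would handle it through the smoothing estimates of $T(t)$ on Hölder spaces, as in \cite{JLL,ZZL}.

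Finally we prove nonnegativity. Write the equation as $u_t=-(-\Delta)^\alpha u-\nabla u\cdot B+c(x,t)u$ with $B=\chi_1\nabla v-\chi_2\nabla w$ and bounded $c=a-\chi_1\Delta v+\chi_2\Delta w-b|u|^{\gamma-1}$. A comparison principle for the fractional operator then preserves $u\ge0$. Concretely, one considers $e^{-Mt}u$ at a negative infimum (approached asymptotically, since $\mathbb{R}^N$ is unbounded) and uses $(-\Delta)^\alpha u\le 0$ at a minimum. Once $u\ge0$, we have $|u|^k=u^k$, so $(u,v,w)$ solves \eqref{1.1}, and $v,w\ge0$ by positivity of the Bessel kernels. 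Uniqueness among classical solutions follows from the Lipschitz estimates together with Gronwall's inequality applied to the mild formulation.
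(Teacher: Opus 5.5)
Your proposal takes the same route the paper indicates, a contraction-mapping argument for the mild formulation following \cite{JLL}; since the paper omits the proof entirely, your sketch (the $|u|^k$, $|u|^{\gamma-1}u$ modification, continuation and blow-up criterion, comparison for $u\ge0$, Gronwall uniqueness) supplies more detail than the source and is sound. The one step to tighten is regularity: $u(t)\in D((-\Delta)^\beta)$ with $\beta>\frac12$ cannot be reached in one pass from $\|u\nabla v\|_{L^\infty}$, because the singularity $(t-s)^{-(2\beta+1)/(2\alpha)}$ is not integrable, so you must iterate the H\"older smoothing estimates (each pass gains just under $2\alpha-1$ derivatives, and the flux inherits the regularity of $u$), which is the bootstrap you allude to.
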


\begin{theorem}[Global Boundedness]\label{thm:mytheorem1.2}
Let $\gamma > 1$, $\alpha \in ( \frac{1}{2}, 1 )$, $k\geq1$, $\chi_1, \chi_2, a, b > 0$, $u_0 \in C_{unif}^b \left( \mathbb{R}^N \right)$ and $\inf\limits_{x \in \mathbb{R}^N} u_0(x) > 0$. Then \eqref{1.1} possesses a unique nonnegative global classical solution $(u, v, w)$ if one of the following assumptions holds:
\begin{enumerate}
\item[\rm(a)] $\gamma \geq k + 1$, $b + \chi_2 \mu_2 - \chi_1 \mu_1 - M > 0$;
\item[\rm(b)] $\gamma < k + 1$, $\chi_2 \lambda_2 \mu_2 > \chi_1 \lambda_1 \mu_1$, $\lambda_1 > \lambda_2$;
\item[\rm(c)] $\gamma < k + 1$, $\| u_0 \|_{L^\infty} \leq ( \frac{b - a}{M + \chi_1 \mu_1} )^{\frac{1}{k}}$, $b > a + M + \chi_1 \mu_1$;
\item[\rm(d)] $\gamma \neq k + 1$, $\chi_2 \mu_2 = \chi_1 \mu_1$, $\lambda_1 = \lambda_2$,
\end{enumerate}
where
\begin{align*}
M := \min \biggl\{ &\frac{1}{\lambda_1} \Bigl( (\chi_2 \mu_2 \lambda_2 - \chi_1 \mu_1 \lambda_1)_+ + \chi_2 \mu_2 (\lambda_1 - \lambda_2)_+ \Bigr), \\
&\frac{1}{\lambda_2} \Bigl( (\chi_2 \mu_2 \lambda_2 - \chi_1 \mu_1 \lambda_1)_+ + \chi_1 \mu_1 (\lambda_1 - \lambda_2)_+ \Bigr) \biggr\}.
\end{align*}
Furthermore, it holds that $\| u \|_{L^\infty} \leq C_0$, where
\begin{equation*}
C_0 =
\begin{cases}
\| u_0 \|_{L^\infty}, & \text{\rm if } \chi_i = a = b = 0, \\
\max \left\{ 1, \| u_0 \|_{L^\infty}, \left( \frac{a}{b + \chi_2 \lambda_2 - \chi_1 \mu_1 - M} \right)^{\frac{1}{k}} \right\}, &  if ~ \mathrm{(a)} ~holds, \\
\max \left\{ 1, \| u_0 \|_{L^\infty}, \left( \frac{a}{b} \right)^{\frac{1}{\gamma - 1}} \right\}, & if ~ \mathrm{(b)} ~holds, \\
\max \left\{ 1, \| u_0 \|_{L^\infty}, C_* \right\}, & if ~ \mathrm{(c)} ~holds, \\
\max \left\{1, \| u_0 \|_{L^\infty}, \left( \frac{a}{b} \right)^{\frac{1}{\gamma - 1}} \right\}, & if ~ \mathrm{(d)} ~holds
\end{cases}
\end{equation*}
and \( C_* \leq \left( \frac{b - a}{M + \chi_1 \mu_1} \right)^{\frac{1}{k}} \).
\end{theorem}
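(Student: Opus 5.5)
By Proposition \ref{pro1.1}, it suffices to show a uniform-in-time bound $\|u(\cdot,t)\|_{L^\infty}\le C_0$ on $[0,T_{\max})$; then $T_{\max}=\infty$. The plan is to expand the chemotactic terms using the elliptic equations, reduce to a scalar comparison inequality along the spatial maximum, and conclude with an ODE comparison argument.

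\textbf{Expanding the cross-diffusion terms.} First I rewrite the taxis terms. From the second and third equations, $\Delta v=\lambda_1 v-\mu_1u^k$ and $\Delta w=\lambda_2 w-\mu_2u^k$. Hence
\begin{align*}
-\chi_1\nabla\cdot(u\nabla v)+\chi_2\nabla\cdot(u\nabla w)
&=(\chi_2\nabla w-\chi_1\nabla v)\cdot\nabla u+(\chi_1\mu_1-\chi_2\mu_2)u^{k+1}\\
&\quad+u(\chi_2\lambda_2 w-\chi_1\lambda_1 v).
\end{align*}

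\textbf{Bounding the nonlocal zero-order term.} Write $\chi_2\lambda_2 w-\chi_1\lambda_1 v$ through the resolvents $v=\mu_1(\lambda_1-\Delta)^{-1}u^k$ and $w=\mu_2(\lambda_2-\Delta)^{-1}u^k$. The two kernels are positive, and
\[
\|(\lambda-\Delta)^{-1}f\|_\infty\le \lambda^{-1}\|f\|_\infty .
\]
The target is the bound
\[
\chi_2\lambda_2 w-\chi_1\lambda_1 v\le M\,\sup_x u^k(\cdot,t).
\]
One way is to split
\[
\chi_2\lambda_2 w-\chi_1\lambda_1 v=\frac{\chi_2\mu_2\lambda_2-\chi_1\mu_1\lambda_1}{\mu_2}\,w+\chi_1\lambda_1\Bigl(\frac{\mu_1}{\mu_2}w-v\Bigr).
\]
The difference of the resolvents is controlled by the comparison principle using $(\lambda_1-\lambda_2)_+$. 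The symmetric split with $v$ in place of $w$ produces the second entry of the minimum defining $M$.

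\textbf{Reduction to an ODE inequality.} At a point where $u(\cdot,t)$ attains, or nearly attains, its supremum, we have $\nabla u=0$ and $-(-\Delta)^\alpha u\le 0$. Since $\mathbb R^N$ is unbounded, I would handle near-maxima by the standard perturbation device: add $\varepsilon\langle x\rangle^{\delta}$ or use Kato-type arguments, as in \cite{JLL,BW}. Setting $U(t)=\sup_x u$, this yields in a viscosity/Dini sense
\[
U'\le aU-bU^\gamma+(\chi_1\mu_1-\chi_2\mu_2+M)U^{k+1}.
\]

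\textbf{Case analysis.}
\begin{enumerate}
\item[(a)] If $\gamma\ge k+1$ and $U\ge1$, then $U^\gamma\ge U^{k+1}$. This gives $U'\le aU-(b+\chi_2\mu_2-\chi_1\mu_1-M)U^{k+1}$, so $U$ cannot exceed $\max\{1,\|u_0\|_\infty,(a/(b+\chi_2\mu_2-\chi_1\mu_1-M))^{1/k}\}$.
\item[(b)] I would instead keep the signed term directly. Since $\lambda_1>\lambda_2$ and $\chi_2\lambda_2\mu_2>\chi_1\lambda_1\mu_1$, I expect $\chi_2\lambda_2w-\chi_1\lambda_1v\le0$ at the maximum point, because the $w$-kernel is wider. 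Then the taxis contribution is at most $(\chi_1\mu_1-\chi_2\mu_2)U^{k+1}$. Note $\chi_2\mu_2>\chi_1\mu_1\lambda_1/\lambda_2>\chi_1\mu_1$, so this is negative, and the bound $(a/b)^{1/(\gamma-1)}$ follows.
\item[(d)] With $\chi_2\mu_2=\chi_1\mu_1$ and $\lambda_1=\lambda_2$, we get $\chi_2 w=\chi_1 v$, so all taxis terms cancel exactly, leaving the logistic ODE.
\item[(c)] The inequality $U'\le aU-bU^\gamma+(M+\chi_1\mu_1)U^{k+1}$ is only useful while $U^k\le(b-a)/(M+\chi_1\mu_1)$. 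I would use a continuity/invariant-set argument: as long as $U\le1$ or the threshold holds, $U'\le(a-b+(M+\chi_1\mu_1)U^k)U$. This combines with $U^\gamma$ versus $U^{k+1}$ according to $\gamma<k+1$, giving invariance of the region below $C_*$.
\end{enumerate}

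The main obstacle is making the maximum-point argument rigorous on $\mathbb R^N$ with the nonlocal operator. A second difficulty, likely the subtlest, is obtaining the precise constant $M$ and the sign claim in (b) from the resolvent comparison $(\lambda_1-\Delta)^{-1}\le(\lambda_2-\Delta)^{-1}$ pointwise, valid for nonnegative data when $\lambda_1\ge\lambda_2$. Finally, uniqueness and nonnegativity come from Proposition \ref{pro1.1}.
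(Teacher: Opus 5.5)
Your overall strategy is sound and differs from the paper's, but your argument for case (b) relies on a false claim. It is not true that $\chi_2\lambda_2 w-\chi_1\lambda_1 v\le 0$ at a maximum point of $u$. For spatially constant $u\equiv c>0$ one has $\lambda_1 v=\mu_1c^k$ and $\lambda_2 w=\mu_2c^k$, so $\chi_2\lambda_2 w-\chi_1\lambda_1 v=(\chi_2\mu_2-\chi_1\mu_1)c^k>0$ everywhere. This happens precisely because the hypotheses of (b) force $\chi_2\mu_2>\chi_1\mu_1$. The ``wider kernel'' heuristic ignores that the two kernels carry different masses, $\chi_2\mu_2$ and $\chi_1\mu_1$.

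What is true is the sign of the \emph{combined} zero-order term. With $g:=U^k-u^k\ge 0$,
\[
(\chi_1\mu_1-\chi_2\mu_2)U^k+(\chi_2\lambda_2 w-\chi_1\lambda_1 v)=-\bigl[\chi_2\mu_2\lambda_2(\lambda_2-\Delta)^{-1}g-\chi_1\mu_1\lambda_1(\lambda_1-\Delta)^{-1}g\bigr]\le 0,
\]
since $(\lambda_2-\Delta)^{-1}g\ge(\lambda_1-\Delta)^{-1}g\ge 0$ and $\chi_2\mu_2\lambda_2>\chi_1\mu_1\lambda_1$. More simply, and this is what the paper does, no separate argument is needed. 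Under (b) both entries in the definition of $M$ equal $\chi_2\mu_2-\chi_1\mu_1$. So the coefficient $\chi_1\mu_1-\chi_2\mu_2+M$ in your general inequality vanishes, and it already reads $U'\le aU-bU^\gamma$.

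Case (c) is also mis-stated. The inequality $U'\le (a-b+(M+\chi_1\mu_1)U^k)U$ needs $U^\gamma\ge U$, i.e.\ $U\ge 1$, and it fails for $U<1$. The correct check is that the right-hand side is negative for $1\le U\le R:=((b-a)/(M+\chi_1\mu_1))^{1/k}$, where $R>1$. This gives $U\le \max\{1,\|u_0\|_{L^\infty}\}$, and the comparison of $U^\gamma$ with $U^{k+1}$ plays no role. This is the paper's check at level $C_0$, using $C_0^{\gamma-1}\ge 1$.

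On the route itself, the paper gets global existence from Schauder's fixed point theorem on $Q_0=\{0\le u\le C_0,\ u(\cdot,0)=u_0\}$. For $u\in Q_0$ the nonlocal term is frozen and bounded by $MC_0^k$, so the constant $C_0$ is a supersolution of a local problem. Compactness and continuity of $u\mapsto U$, via $X^\beta$ estimates and Arzel\`a--Ascoli, then give a fixed point identified with the local solution.

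You instead bound the actual solution and use the blow-up alternative of Proposition \ref{pro1.1}. The key estimate $\chi_2\lambda_2w-\chi_1\lambda_1v\le M\sup u^k$ and the case-wise sign check are the same. Your route avoids all the compactness machinery. Its one delicate point is that the zero-order term depends on $\sup_x u(\cdot,t)$.

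Rather than near-maximum points, the cleanest fix is a first-exit argument:
\begin{itemize}
\item For $K>C_0$, let $t^*:=\sup\{t:\|u(\cdot,s)\|_{L^\infty}\le K \text{ for all } s\le t\}$.
\item On $[0,t^*]$ the nonlocal term is at most $MK^k$, and in every case the reaction at level $K$ is strictly negative.
\item Comparison with the ODE started at $\|u_0\|_{L^\infty}<K$ then gives $\|u(\cdot,t^*)\|_{L^\infty}<K$.
\item This contradicts the continuity of $t\mapsto u(\cdot,t)$ in $C^b_{unif}(\mathbb{R}^N)$; then let $K\downarrow C_0$.
\end{itemize}
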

\begin{table}[H]
\centering
\small 
\caption{The global boundedness of classical solutions}
\label{tab:your_label} 
\begin{tabular}{
>{\centering\arraybackslash}m{4.8cm} 
>{\centering\arraybackslash}m{3.8cm}
>{\centering\arraybackslash}m{5cm}
}
\toprule
& $(\mathrm{a})\ \gamma \geq k + 1$ & $(\mathrm{c})\ \gamma < k + 1$ \\
\midrule
$\begin{aligned}[c]
\chi_2\lambda_2\mu_2 &\geq \chi_1\lambda_1\mu_1, \\
\lambda_1 &\geq \lambda_2
\end{aligned}$
& $b > 0$
& $b > a + \chi_2\mu_2$ \\
\addlinespace[0.6em] 
$\begin{aligned}[c]
\chi_2\lambda_2\mu_2 &\geq \chi_1\lambda_1\mu_1, \\
\lambda_1 &\leq \lambda_2
\end{aligned}$
& $\begin{aligned}[t]
b &> \chi_1\mu_1 \biggl(1 - \frac{\lambda_1}{\lambda_2}\biggr)
\end{aligned}$
& $\begin{aligned}[t]
b &> a + \chi_1\mu_1 \biggl(1 - \frac{\lambda_1}{\lambda_2}\biggr) + \chi_2\mu_2
\end{aligned}$ \\
\addlinespace[0.6em]
$\begin{aligned}[c]
\chi_2\lambda_2\mu_2 &\leq \chi_1\lambda_1\mu_1, \\
\lambda_1 &\geq \lambda_2
\end{aligned}$
& $b > \chi_1\mu_1 - \chi_2\mu_2\frac{\lambda_2}{\lambda_1}$
& $b > a + \chi_1\mu_1 + \chi_2\mu_2\biggl(1 - \frac{\lambda_2}{\lambda_1}\biggr)$ \\
\addlinespace[0.6em]
$\begin{aligned}[c]
\chi_2\lambda_2\mu_2 &\leq \chi_1\lambda_1\mu_1, \\
\lambda_1 &\leq \lambda_2
\end{aligned}$
& $b > \chi_1\mu_1 - \chi_2\mu_2$
& $b > a + \chi_1\mu_1$ \\
\bottomrule
\end{tabular}
\end{table}

\medskip
\noindent\textbf{Remark 1.1} As shown in Columns 2 and 3 of \cref{tab:your_label}, comparing the cases of $\gamma \geq k+1$ and $\gamma < k+1$ for a fixed $\gamma$, a larger value of $b$ must be chosen to avoid the blow-up of the solution. This reflects that an increase in the parameter $k$ enhances cell secretion of chemical signaling molecules which attracts more cells and promotes population growth. The term $-bu^{\gamma}$ restricts cellular proliferation, requiring increased damping parameter $b$ to prevent the blow-up of solution when $\gamma$ cannot counterbalance the growth promotion by the nonlinear exponent $k$.\\
\noindent\textbf{Remark 1.2} Case (d) confirms the global boundedness of solutions in the critical case which consequently guarantees the asymptotic stability of constant equilibria.

\begin{theorem}[Asymptotic Behavior]\label{thm:mytheorem1.3}
Let $\gamma > 1$, $\alpha \in ( \frac{1}{2}, 1)$, $k\geq1$, $\chi_1, \chi_2, a, b>0$, $u_0 \in C_{unif}^b \left( \mathbb{R}^N \right)$ and $\inf\limits_{x \in \mathbb{R}^N} u_0(x) > 0$. Then the unique global classical solution $(u, v, w)$ satisfies
\begin{align*}
\lim_{t \to \infty} \Bigg[
\left\| u(\cdot, t) - \left( \frac{a}{b} \right)^{\gamma - 1} \right\|_{L^\infty}
+\left\| v(\cdot, t) - \frac{\mu_1}{\lambda_1} \left( \frac{a}{b} \right)^{\gamma - 1} \right\|_{L^\infty}
+ \left\| w(\cdot, t) - \frac{\mu_2}{\lambda_2} \left( \frac{a}{b} \right)^{\gamma - 1} \right\|_{L^\infty}
\Bigg]
= 0
\end{align*}
if one of the following assumptions holds:
\begin{enumerate}
\item[\rm(a)] $\gamma = k + 1$, $b + \chi_2 \mu_2 - \chi_1 \mu_1 - H > 0$;
\item[\rm(b)] $\gamma \neq k + 1$, $\chi_2 \mu_2 = \chi_1 \mu_1$, $\lambda_1 = \lambda_2$,
\end{enumerate}
where
\begin{align*}
H := \min\Big\{ &\frac{1}{\lambda_1} \left[ \vert \chi_1 \mu_1 \lambda_1 - \chi_2 \mu_2 \lambda_2 \vert + \chi_2 \mu_2 \vert \lambda_1 - \lambda_2 \vert \right], \\
&\frac{1}{\lambda_2} \left[ \vert \chi_1 \mu_1 \lambda_1 - \chi_2 \mu_2 \lambda_2 \vert + \chi_1 \mu_1 \vert \lambda_1 - \lambda_2 \vert \right] \Big\}.
\end{align*}
\end{theorem}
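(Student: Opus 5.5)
The plan is to reduce the $u$-equation to a scalar parabolic inequality with a bounded drift, and to close the argument by comparing $u$ with spatially homogeneous ODE solutions. I read the limiting constants as $u^*=(a/b)^{\frac{1}{\gamma-1}}$, the positive zero of $a-bu^{\gamma-1}$, and $v^*=\frac{\mu_1}{\lambda_1}(u^*)^k$, $w^*=\frac{\mu_2}{\lambda_2}(u^*)^k$, the constant solutions of the elliptic equations. The exponents as printed in the statement look like typos.

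\textbf{Step 1: nondivergence form.} Theorem~\ref{thm:mytheorem1.2} gives a global bounded solution, $\|u\|_{L^\infty}\le C_0$, under (a) (since $\gamma=k+1\ge k+1$ and $H\ge M$) or under (b), which is case (d). Elliptic regularity then bounds $\nabla v$ and $\nabla w$ uniformly. Expanding the divergence terms and substituting $\Delta v=\lambda_1 v-\mu_1u^k$ and $\Delta w=\lambda_2 w-\mu_2u^k$ gives
\begin{equation*}
u_t=-(-\Delta)^\alpha u-(\chi_1\nabla v-\chi_2\nabla w)\cdot\nabla u+u\bigl(a-bu^{\gamma-1}+(\chi_1\mu_1-\chi_2\mu_2)u^k-\chi_1\lambda_1v+\chi_2\lambda_2w\bigr).
\end{equation*}
The drift is bounded, so the comparison principle for $-(-\Delta)^\alpha$ with a bounded first-order term applies to bounded uniformly continuous sub- and supersolutions. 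For the $v,w$ components I use the elliptic comparison bounds $\frac{\mu_i}{\lambda_i}\inf u^k\le v,w\le\frac{\mu_i}{\lambda_i}\sup u^k$ (with $i=1$ for $v$ and $i=2$ for $w$) and the resolvent bound $\|v-v^*\|_{L^\infty}\le\frac{\mu_1}{\lambda_1}\|u^k-(u^*)^k\|_{L^\infty}$, and likewise for $w$. Hence convergence of $u$ alone suffices.

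\textbf{Case (b).} Set $z=\chi_1v-\chi_2w$. Since $\lambda_1=\lambda_2$ and $\chi_1\mu_1=\chi_2\mu_2$, $z$ solves $\Delta z-\lambda_1z=0$ in $\mathbb{R}^N$ and is bounded, so $z\equiv0$. The reaction term then reduces exactly to $u(a-bu^{\gamma-1})$.

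Let $\underline{y}$ and $\overline{y}$ solve $y'=y(a-by^{\gamma-1})$ with initial data $\inf u_0>0$ and $\|u_0\|_{L^\infty}$. These are a subsolution and a supersolution, so $\underline{y}(t)\le u\le\overline{y}(t)$. Both ODE solutions tend to $u^*$, which gives uniform convergence of $u$.

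\textbf{Case (a), setup.} Here $\gamma=k+1$, and the reaction is $u\bigl(a-(b+\chi_2\mu_2-\chi_1\mu_1)u^k-\chi_1\lambda_1v+\chi_2\lambda_2w\bigr)$. I first prove a uniform positive lower bound $\liminf_{t\to\infty}\inf_x u>0$. The reaction coefficient is at least $a-C$ times bounded quantities that vanish as $u\to0$, so a small constant ODE subsolution started from $\inf u_0>0$ should work.

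Then define
\begin{equation*}
\overline U=\limsup_{t\to\infty}\sup_x u^k,\qquad \underline U=\liminf_{t\to\infty}\inf_x u^k.
\end{equation*}
Fix $\varepsilon>0$ and choose $T$ with $\underline U-\varepsilon\le u^k\le\overline U+\varepsilon$ for $t\ge T$. This bounds $v$ and $w$ above and below.

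The term $-\chi_1\lambda_1v+\chi_2\lambda_2w$ must be split in a way that produces $H$. For the first entry of the minimum, write it as $-\lambda_1(\chi_1v-\chi_2w)+\chi_2(\lambda_2-\lambda_1)w$. Estimate $\chi_1v-\chi_2w$ by $\frac{1}{\lambda_1}$ times the solution of $\Delta z-\lambda_1z=-(\chi_1\mu_1-\chi_2\mu_2\lambda_2/\lambda_1)u^k+\dots$. It may be cleaner to bound $\chi_1\lambda_1 v$ and $\chi_2\lambda_2 w$ directly using $|\chi_1\mu_1\lambda_1-\chi_2\mu_2\lambda_2|$ and $|\lambda_1-\lambda_2|$; either route should leave an error proportional to $\overline U-\underline U$. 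The second entry of the minimum comes from the symmetric splitting.

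\textbf{Case (a), closing the argument.} Comparing $u$ on $[T,\infty)$ with the ODE $y'=y\bigl(a-(b+\chi_2\mu_2-\chi_1\mu_1)y^k+\text{const}\bigr)$, and then letting $t\to\infty$ and $\varepsilon\to0$, should give two inequalities of the form
\begin{align*}
(b+\chi_2\mu_2-\chi_1\mu_1)\overline U&\le a+(\text{terms})+H'(\overline U-\underline U),\\
(b+\chi_2\mu_2-\chi_1\mu_1)\underline U&\ge a+(\text{terms})-H'(\overline U-\underline U),
\end{align*}
with $H'\le H$. Subtracting them yields $(b+\chi_2\mu_2-\chi_1\mu_1-H)(\overline U-\underline U)\le0$, hence $\overline U=\underline U$. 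The common value must be a zero of the averaged reaction, namely $(u^*)^k$ with $u^*=(a/b)^{1/k}$.

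\textbf{Main obstacle.} The delicate step is the bookkeeping in the splitting. The lower-order constants must cancel exactly when $\overline U=\underline U$, and the coefficient of $\overline U-\underline U$ must come out as precisely $H$ (the minimum of the two splittings) rather than a cruder constant. A secondary technical point is the lower bound $\underline U>0$: without it the ODE comparison near $0$ could fail to identify the limit as the positive equilibrium.
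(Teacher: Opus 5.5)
Your plan is essentially the paper's argument. It uses boundedness from the global boundedness theorem; your check $M\le H$, entrywise since $x_+\le|x|$, is what lets you invoke its case (a). It then uses spatially homogeneous ODE comparison from above and below after a large time, and two-sided bounds on $\chi_2\lambda_2w-\chi_1\lambda_1v$ in terms of $\overline U=\limsup\sup u^k$ and $\underline U=\liminf\inf u^k$. It closes by subtraction using $b+\chi_2\mu_2-\chi_1\mu_1-H>0$.

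Your case (b) is more direct than the paper's: $\chi_1v-\chi_2w\equiv0$ removes the drift as well as the nonlocal reaction, so $u$ is squeezed between two ODE solutions. Your resolvent bound supplies the convergence of $v,w$, which the paper leaves implicit. You are also right that the limits should read $(a/b)^{1/(\gamma-1)}$ and $\frac{\mu_i}{\lambda_i}(a/b)^{k/(\gamma-1)}$.

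The bookkeeping you flag works out with the paper's splitting of the time kernel, $(\chi_2\lambda_2\mu_2-\chi_1\lambda_1\mu_1)e^{-\lambda_2s}+\chi_1\lambda_1\mu_1(e^{-\lambda_2s}-e^{-\lambda_1s})$, provided the principal part is kept. Set $D=\frac{1}{\lambda_2}\bigl[(\chi_2\lambda_2\mu_2-\chi_1\lambda_1\mu_1)_+ + \chi_1\mu_1(\lambda_1-\lambda_2)_+\bigr]$. Then, up to $\varepsilon$, $(\chi_2\mu_2-\chi_1\mu_1)\overline U-D(\overline U-\underline U)\le\chi_2\lambda_2w-\chi_1\lambda_1v\le(\chi_2\mu_2-\chi_1\mu_1)\underline U+D(\overline U-\underline U)$. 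The principal part cancels against $(\chi_1\mu_1-\chi_2\mu_2)u^k$. The subtraction leaves the coefficient $2D-(\chi_2\mu_2-\chi_1\mu_1)=\frac{1}{\lambda_2}\bigl[|\chi_1\mu_1\lambda_1-\chi_2\mu_2\lambda_2|+\chi_1\mu_1|\lambda_1-\lambda_2|\bigr]$, which is exactly the second entry of $H$; the first entry comes from splitting around $e^{-\lambda_1s}$. Once $\overline U=\underline U=U$, the two inequalities read $bU\le a\le bU$. The paper's displayed bounds drop this principal part, so its identification of the limit does not literally follow from its own inequalities; your formulation is the consistent one.

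The one step that does not work as you describe it is the preliminary bound $\liminf_{t\to\infty}\inf_x u>0$ via a small constant subsolution. The term $\chi_1\lambda_1v(x,t)$ is a nonlocal average of $u^k(\cdot,t)$, controlled only by $\chi_1\mu_1\|u(\cdot,t)\|_{L^\infty}^k$, so it does not become small at points where $u$ is small. Unless the time kernel is nonnegative (e.g.\ $\lambda_1\ge\lambda_2$ and $\chi_1\lambda_1\mu_1\le\chi_2\lambda_2\mu_2$), the reaction coefficient near $u=0$ is only bounded below by $a$ minus a fixed multiple of $C_0^k$. That can be negative, so no fixed small constant is a subsolution.

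The step is also unnecessary. You only need $\inf_x u(\cdot,T)>0$ at the time $T$ where the lower comparison starts. This holds for every finite $T$ by comparing with $y'=y\bigl(a-\chi_1\mu_1C_0^k-(b+\chi_2\mu_2-\chi_1\mu_1)y^k\bigr)$, $y(0)=\inf u_0>0$, which is exactly the paper's positivity lemma. The lower ODE solution then tends to $\bigl((a+I)_+/c\bigr)^{1/k}$, where $c=b+\chi_2\mu_2-\chi_1\mu_1$ and $I$ is the lower bound for $\chi_2\lambda_2w-\chi_1\lambda_1v$. Hence $c\,\underline U\ge a+I$ holds whatever the sign of $a+I$, and the subtraction alone forces $\overline U=\underline U=a/b$ with no a priori uniform lower bound.
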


\begin{theorem}[Spreading Speed]\label{thm:mytheorem1.4}
Let $\gamma > 1$, $\alpha \in ( \frac{1}{2}, 1)$, $N>2\alpha$, $k\geq1$, $\chi_1, \chi_2, a, b>0$, $u_0 \in C_{unif}^b \left( \mathbb{R}^N \right)$ and $\inf\limits_{x \in \mathbb{R}^N} u_0(x) > 0$.
For $u_0$ with nonempty compact support and $0 < \varepsilon < \frac{a}{N + 2\alpha}$, then
\[
\liminf_{t \to \infty} \inf_{\substack{|x| \le e^{\left( \frac{a}{N + 2\alpha} - \varepsilon \right) t}}} u(x, t) > 0
\]
if one of the following assumptions holds:
\begin{enumerate}
\item[\rm(a)] $\gamma = k + 1,\ b + \chi_2 \mu_2 - \chi_1 \mu_1 - M > 0$;
\item[\rm(b)] $\gamma \neq k + 1,\ \chi_2 \mu_2 = \chi_1 \mu_1,\ \lambda_1 = \lambda_2$.
\end{enumerate}
Furthermore, for $u_0 \in X_0$ and $\varepsilon > 0$, then
\[
\begin{cases}
\lim\limits_{t \to \infty} \sup\limits_{|x| \geq e^{\left( \frac{a+ MC_0^k }{N + 2\alpha} + \varepsilon \right) t}} u(x, t) = 0, & if ~ \mathrm{(a)} ~holds, \\
\lim\limits_{t \to \infty} \sup\limits_{|x| \geq e^{\left( \frac{a}{N + 2\alpha} + \varepsilon \right) t}} u(x, t) = 0, & if ~ \mathrm{(b)} ~holds,
\end{cases}
\]
where $X_0$ is defined as
\[
X_0 := \left\{ u \in C_{unif}^b \left( \mathbb{R}^N \right), \inf\limits_{x \in \mathbb{R}^N} u_0 > 0 \text{ and } u_0(x) \leq C^* |x|^{-N-2\alpha} \text{ for all } x \in \mathbb{R}^N \right\}.
\]
\end{theorem}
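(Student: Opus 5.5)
The plan is to rewrite the $u$-equation in non-divergence form and compare $u$ with solutions of scalar fractional KPP-type equations. The spreading results for the fractional Fisher--KPP equation $U_t=-(-\Delta)^\alpha U+U(r-sU^{\gamma-1})$ then give the two spreading rates. These results (Cabr\'e--Roquejoffre type) say that level sets move exponentially fast with rate $\frac{r}{N+2\alpha}$, both for compactly supported data and for data bounded by $C^*|x|^{-N-2\alpha}$.

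Using the elliptic equations, $\Delta v=\lambda_1 v-\mu_1u^k$ and $\Delta w=\lambda_2w-\mu_2u^k$. Substituting, we get
\[
u_t=-(-\Delta)^\alpha u-(\chi_1\nabla v-\chi_2\nabla w)\cdot\nabla u+u\Big(a-bu^{\gamma-1}-(\chi_1\mu_1-\chi_2\mu_2)u^k-\chi_2\lambda_2w+\chi_1\lambda_1v\Big).
\]
Writing $\chi_1\lambda_1v-\chi_2\lambda_2w=(\chi_1\mu_1\lambda_1-\chi_2\mu_2\lambda_2)\frac{v}{\mu_1}+\chi_2\mu_2\lambda_2\big(\frac{v}{\mu_1}-\frac{w}{\mu_2}\big)$, comparing $(\lambda_1-\Delta)^{-1}$ with $(\lambda_2-\Delta)^{-1}$, and using $\|u\|_{L^\infty}\le C_0$ from Theorem \ref{thm:mytheorem1.2}, this bracket is bounded above by $M\|u\|_{L^\infty}^k\le MC_0^k$. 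This is the same estimate that defines $M$. In case (b), $\chi_1\mu_1=\chi_2\mu_2$ and $\lambda_1=\lambda_2$ force $\chi_1v=\chi_2w$, so the chemotactic terms cancel exactly and $u$ solves the scalar equation $u_t=-(-\Delta)^\alpha u+au-bu^\gamma$.

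For the upper estimate, $u$ is a subsolution of the linear equation $\bar u_t=-(-\Delta)^\alpha\bar u-\beta\cdot\nabla\bar u+(a+MC_0^k)\bar u$ in case (a), and of the same equation with rate $a$ in case (b). Here $\beta=\chi_1\nabla v-\chi_2\nabla w$ is bounded by elliptic estimates. The key step is to construct an explicit supersolution of the form $\phi(x,t)=Ce^{rt}(1+|x|^{N+2\alpha}e^{-?})^{-1}$, or more simply to use the heat-kernel bound $p_\alpha(x,t)\lesssim \min\{t^{-N/(2\alpha)},t|x|^{-N-2\alpha}\}$ together with the decay $u_0\le C^*|x|^{-N-2\alpha}$. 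This gives $u(x,t)\lesssim e^{rt}(1+t)|x|^{-N-2\alpha}$, which tends to $0$ on $|x|\ge e^{(\frac{r}{N+2\alpha}+\varepsilon)t}$. The drift term is the main obstacle, since it destroys the explicit kernel. I would handle it by checking directly that $\phi=\min\{C_0,Ke^{rt}|x|^{-N-2\alpha}\}$-type barriers satisfy $|\beta\cdot\nabla\phi|\le\delta\phi$ for large $|x|$ (the gradient of $|x|^{-N-2\alpha}$ decays like $|x|^{-1}\phi$), and by absorbing $\delta$ into $\varepsilon$. The fractional Laplacian of such algebraic tails is bounded by $C|x|^{-N-2\alpha}$, which is again comparable to $\phi$, and this is what produces the rate $\frac{r}{N+2\alpha}$.

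For the lower bound, in case (b) the equation is scalar and the fractional KPP spreading result applies directly. In case (a), with $\gamma=k+1$, the reaction is at least $u\big(a-(b+\chi_1\mu_1-\chi_2\mu_2+M)u^{k}\big)$, where the extra $M$ comes from the same $v,w$ comparison applied to the negative part. So for small $u$ it is $\ge(a-\delta)u$. I would first use Theorem \ref{thm:mytheorem1.3} and $\inf u_0>0$ to get a uniform positive lower bound for $u$ away from zero via comparison with the ODE $\dot y=y(a-\tilde b y^k)$. Where compact support is relevant, I would instead build subsolutions $\underline u=\eta\,e^{(a-\delta)t}\min\{1,|x|^{-N-2\alpha}\}$, truncated at a small level. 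As before, the drift is treated as a perturbation controlled by the decay of $\nabla\underline u$, and the bounds $\|\nabla v\|_{L^\infty},\|\nabla w\|_{L^\infty}\lesssim C_0^k$ are used. The invasion argument then proceeds along the lines of \cite{JLL,BW}: the subsolution reaches size $\eta$ at $|x|\sim e^{(\frac{a-\delta}{N+2\alpha})t}$, and positivity behind the front is maintained by the logistic lower comparison. This yields the $\liminf$ claim with $\varepsilon$ absorbing $\delta$.
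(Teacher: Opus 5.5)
Your upper-bound argument and your treatment of case (b) can be made to work, but the lower bound in case (a) has a genuine gap. The step ``the reaction is at least $u\bigl(a-(b+\chi_1\mu_1-\chi_2\mu_2+M)u^k\bigr)$, so for small $u$ it is $\ge(a-\delta)u$'' treats the chemotactic contribution as if it were controlled by the value $u(x,t)$. It is not. The functions $v=\mu_1(\lambda_1-\Delta)^{-1}u^k$ and $w=\mu_2(\lambda_2-\Delta)^{-1}u^k$ are nonlocal, and comparing the two resolvents only yields $\chi_2\lambda_2w-\chi_1\lambda_1v\ge-M'\|u(\cdot,t)\|_{L^\infty}^k$ (for instance $-\chi_1\lambda_1v\ge-\chi_1\mu_1\|u(\cdot,t)\|_{L^\infty}^k$). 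Here $M'$ is the mirror image of $M$ with the positive parts reversed; it has nothing to do with $u(x,t)$ and may well exceed $a$. So where $u$ is small, the coefficient $a-(b+\chi_2\mu_2-\chi_1\mu_1)u^k+\chi_2\lambda_2w-\chi_1\lambda_1v$ need not be close to $a$, or even positive. Consequently neither your truncated algebraic subsolution (whose support is all of $\mathbb{R}^N$, including regions where $u$ is of order one) nor the ODE comparison $\dot y=y(a-\tilde by^k)$ can be verified. Appealing to \cref{thm:mytheorem1.3} does not rescue this: it assumes $b+\chi_2\mu_2-\chi_1\mu_1-H>0$, and $H\ge M$, so it is not available under hypothesis (a) here.

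The missing idea is the localization in the paper's Steps 1--2. Claim 1 shows that if $\hat u\le\eta$ on a ball $B_{2L(\eta)}$ during a time interval, then $\hat v,\hat w,|\nabla\hat v|,|\nabla\hat w|\le M_1\eta^k$ there. This uses the choice of $L(\eta)$ in \eqref{5.3}, which makes the tail contributions carrying $C_0^k$ at most $\eta^k$. Only then does $\hat u$ dominate a linear equation with small drift and rate $\bar a>\lambda_1^\alpha$ on $D_l$, where the Dirichlet principal eigenfunction (negative principal eigenvalue) gives the doubling of Claim 3. The Harnack-type Claim 2 handles times when $\sup\hat u\ge\eta$, and the open-set argument of Step 2 glues the two alternatives. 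The essential point is that the subsolution lives only where $u$ itself is known to be small on a large ball, and that is exactly what makes the nonlocal terms small.

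Two smaller points. First, your non-divergence expansion has the last three signs reversed. Since $u\Delta v=\lambda_1uv-\mu_1u^{k+1}$, the bracket is $a-bu^{\gamma-1}+(\chi_1\mu_1-\chi_2\mu_2)u^k+\chi_2\lambda_2w-\chi_1\lambda_1v$. With these signs the relevant estimate is $\chi_2\lambda_2w-\chi_1\lambda_1v\le MC_0^k$, and (a) lets you drop the $u^k$ term. With your signs you would be bounding $\chi_1\lambda_1v-\chi_2\lambda_2w$, whose bound is not $M$, and dropping $u^k$ would need $b+\chi_1\mu_1-\chi_2\mu_2\ge0$. Corrected, this is the paper's \eqref{5.11}.

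From there the paper uses two-sided kernel bounds for $-(-\Delta)^\alpha$ perturbed by a Kato-class drift, while you use a barrier. The barrier is a legitimate alternative, and it avoids the fact that such kernel bounds are uniform only for $t$ in bounded intervals. However, it needs the cap. Take $\phi=\min\{2C_0,Ke^{(r+\delta)t}|x|^{-N-2\alpha}\}$ and let $R(t)$ be the radius of the capped region. Wherever $\phi<2C_0$ one has $(-\Delta)^\alpha\phi\ge-CR(t)^{-2\alpha}\phi$ and $|\beta\cdot\nabla\phi|\le C\|\beta\|_{L^\infty}R(t)^{-1}\phi$. Taking $K$ large makes both errors smaller than $\delta\phi$, and since $u\le C_0$, a first contact can only occur in that region. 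An error merely ``comparable to $\phi$'', as you put it, would shift the rate.

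Second, in case (b) your observation $\chi_1v\equiv\chi_2w$ reduces the system exactly to $u_t=-(-\Delta)^\alpha u+au-bu^\gamma$. The Cabr\'e--Roquejoffre results then give both bounds. That part is correct and more transparent than the paper's ``similar arguments''.
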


\noindent\textbf{Remark 1.3} This result covers and extends the spreading properties established by Jiang et al. \cite{JLL} with logistic source $\gamma = 2$ and linear productions $k = 1$.

\noindent\textbf{Remark 1.4} When $\gamma = k + 1$, the upper bound of the propagation speed is $\frac{a+ M C_0^{k}}{N + 2\alpha}$. Given that $C_0 > 1$, it can be deduced that the upper bound of the propagation speed has a monotonically increasing relationship with $k$. In particular, since $M$ characterizes the balance coefficient between attractive and repulsive effects, when $M = 0$ (that is, $\chi_1 \mu_1 = \chi_2 \mu_2$ and $\lambda_1 = \lambda_2$), the spreading speed is exactly $\frac{a}{N + 2\alpha}$.

\noindent\textbf{Remark 1.5} Similar to the proof process of \cref{thm:mytheorem1.4}, we can obtain the persistence of the solution. Let $\gamma > 1$, $\alpha \in ( \frac{1}{2}, 1)$, $k\geq1$, $\chi_1, \chi_2, a, b>0$, $u_0 \in C_{unif}^b \left( \mathbb{R}^N \right)$ and $\inf\limits_{x \in \mathbb{R}^N} u_0(x) > 0$. Then there exists a time $\tilde{T}$ such that
\[
m \leq u(x, t) \leq M
\]

\noindent if one of the following assumptions holds:
\begin{enumerate}
\item[(a)] $\gamma = k + 1,\ b + \chi_2 \mu_2 - \chi_1 \mu_1 - M > 0$;
\item[(b)] $\gamma \neq k + 1,\ \chi_2 \mu_2 = \chi_1 \mu_1,\ \lambda_1 = \lambda_2$
\end{enumerate}
for all $x \in \mathbb{R}^N$ and $t \geq \tilde{T}$, where $M$ is given in \cref{thm:mytheorem1.2}.

First, employing the Schauder's fixed point theorem, we can deduce the global boundedness of classical solutions. Using appropriate estimates for $\chi_1 \mu_1 \lambda_1 - \chi_2 \mu_2 \lambda_2$ and the comparison principle, we can obtain the global asymptotic stability of the solution via the upper-lower solution method. Drawing upon the methodology developed in \cite{JLL}, we establish the proof of the lower bounds of the spreading speed by employing several essential estimates concerning $u$, $v$ and $w$. For the upper bounds of spreading speed, the argument is completed through the construction of appropriate supersolutions combined with applications of the comparison principle and fractional heat kernel estimates.

This paper is organized as follows. In Section 2, we prove the global existence and boundedness of classical solutions. The asymptotic behavior of \eqref{1.1} is investigated in Section 3. Finally, Section 4 is devoted to studying the spreading speed of \eqref{1.1}.



\section{ Global Boundedness }
Building on the local existence theorem for classical solutions (Proposition \ref{pro1.1}), we extend the analysis to establish the global boundedness as stated in \cref{thm:mytheorem1.2}.

First, we consider the following Cauchy problem
\begin{equation}\label{1.0.1}
\left\{
\begin{aligned}
&u_t + (-\Delta)^\alpha u = 0, \quad &&x \in \mathbb{R}^N, \, t > 0, \\
&u(x, 0) = u_0(x), \quad &&x \in \mathbb{R}^N.
\end{aligned}
\right.
\end{equation}
The solutions of \eqref{1.0.1} can be written as $u(t) = K_t^\alpha (x) * u_0(x)$. Here, \( K_t^\alpha (x) \) is a fractional heat kernel, which is denoted by
\[
K_t^\alpha (x) := t^{-\frac{N}{2\alpha}} K^\alpha \left( t^{-\frac{1}{2\alpha}} x \right),
\]
where $K^\alpha (x) := (2\pi)^{-N} \int_{\mathbb{R}^N} e^{i \xi \cdot x} e^{-|\xi|^{2\alpha}} d\xi$ \cite{ZZL,DH}. The operator \(-(-\Delta)^\alpha + I \) generates an analytic semigroup \( \{ T(t) \}_{t \geq 0} \) and
\begin{equation*}
\begin{aligned}
T(t)u = e^{-t} (K_t^\alpha (x) * u)(x) = \int_{\mathbb{R}^N} e^{-t} K_t^\alpha (x - y) u(y) dy
\end{aligned}
\end{equation*}
for every \( u \in X \), \( x \in \mathbb{R}^N \) and \( t \geq 0 \), where Banach space \( X = C_{unif}^b (\mathbb{R}^N) \) or \( X = L^p (\mathbb{R}^N) \).

For \( (\beta \in [0, \infty)) \), define the space \( X^\beta = \text{Dom}(((-\Delta)^\alpha + I)^\beta) \). Then we have the following inequalities
\begin{equation}\label{0.0.1}
\begin{aligned}
\| (T(t) - I)u \|_{L^p(\mathbb{R}^N)} \leq C_{\beta} t^{\beta} \| u \|_{X^{\beta}} \quad \text{for } 0 < \beta \leq 1,\ u \in X^{\beta}, \\
\end{aligned}
\end{equation}
\begin{equation}\label{0.0.2}
\begin{aligned}
\| ((-\Delta)^{\alpha} + I)^{\beta} T(t)u \|_{L^q(\mathbb{R}^N)} \leq M_{\beta} t^{-\beta- \left( \frac{1}{p} - \frac{1}{q} \right) \frac{N}{2\alpha}} e^{-t} \| u \|_{L^p(\mathbb{R}^N)}
\end{aligned}
\end{equation}
for \(1 \leq p \leq q \leq \infty\), \(t > 0\) and \(\beta \geq 0\), where \(C_{\alpha}\), \(C_{\beta}\) and \(M_{\beta}\) are constants depending only on \(\alpha\), \(\beta\), \(p\), \(q\) and \(N\).
We now present several lemmas of the fractional heat kernel theory.
\begin{lemma}[\cite{ZZL}, Lemma 4.2]
For each \(t > 0\), the operator \(T(t)\nabla \cdot\) can be uniquely extended to a bounded linear operator acting on the space \((C_{unif}^{b}(\mathbb{R}^N))^N\), such that
\[
\| T(t)\nabla \cdot u \|_{L^\infty} \leq C_1 t^{-\frac{1}{2\alpha}} e^{-t} \| u \|_{L^\infty}
\]
for all $u \in (C_{unif}^{b}(\mathbb{R}^N))^N$ and $t > 0$, where \(C_1\) is a positive constant depending exclusively on $\alpha$ and $N$.
\label{lemma:Lemma 2.1}
\end{lemma}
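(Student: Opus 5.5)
The plan is to work on the Fourier side and use the explicit semigroup representation $T(t)u = e^{-t}K_t^\alpha * u$. For a smooth, compactly supported (or Schwartz) vector field $u=(u_1,\dots,u_N)$, integration by parts moves the divergence onto the kernel:
\[
T(t)\nabla\cdot u(x) = e^{-t}\int_{\mathbb{R}^N} K_t^\alpha(x-y)\,\nabla_y\cdot u(y)\,dy = e^{-t}\sum_{j=1}^N\int_{\mathbb{R}^N} (\partial_j K_t^\alpha)(x-y)\,u_j(y)\,dy .
\]
Hence $\|T(t)\nabla\cdot u\|_{L^\infty}\le e^{-t}\|\nabla K_t^\alpha\|_{L^1}\|u\|_{L^\infty}$. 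The task therefore reduces to bounding $\|\nabla K_t^\alpha\|_{L^1}$.

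By scaling, $K_t^\alpha(x)=t^{-N/(2\alpha)}K^\alpha(t^{-1/(2\alpha)}x)$ gives $\nabla K_t^\alpha(x)=t^{-N/(2\alpha)-1/(2\alpha)}(\nabla K^\alpha)(t^{-1/(2\alpha)}x)$. A change of variables then yields
\[
\|\nabla K_t^\alpha\|_{L^1}=t^{-\frac{1}{2\alpha}}\|\nabla K^\alpha\|_{L^1},
\]
so it suffices to show $C_1:=N\|\nabla K^\alpha\|_{L^1}<\infty$. This is the main (only nontrivial) obstacle, and I would handle it by pointwise kernel estimates. Near the origin, $|\nabla K^\alpha(x)|\le (2\pi)^{-N}\int|\xi|e^{-|\xi|^{2\alpha}}d\xi<\infty$, so $\nabla K^\alpha$ is bounded and integrable on the unit ball. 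For large $|x|$, I would use the standard decay $|\nabla K^\alpha(x)|\le C|x|^{-N-1-2\alpha}$, which is integrable at infinity.

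To obtain this decay, I would either cite the known estimates for stable densities (as in \cite{DH,ZZL}) or derive it directly. The direct route goes through subordination: $K^\alpha(x)=\int_0^\infty \eta_\alpha(s)(4\pi s)^{-N/2}e^{-|x|^2/(4s)}ds$, with $\eta_\alpha\ge0$ and $\eta_\alpha(s)\lesssim s^{-1-\alpha}$ for large $s$. Differentiating under the integral gives a factor $|x|/s$. Splitting the $s$-integral at $s\sim|x|^2$ then produces the bound $|x|^{-N-1-2\alpha}$ for $|x|\ge1$.

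Finally, I would extend the operator by density. The bound $\|T(t)\nabla\cdot u\|_{L^\infty}\le C_1t^{-1/(2\alpha)}e^{-t}\|u\|_{L^\infty}$ holds for smooth $u$, and the right-hand convolution formula $e^{-t}\sum_j(\partial_jK_t^\alpha)*u_j$ makes sense for every $u\in(C^b_{unif})^N$, since $\nabla K_t^\alpha\in L^1$. I would take this formula as the definition of the extension. It is bounded with the same constant, and it maps into $C^b_{unif}$ because convolving an $L^1$ function with a uniformly continuous bounded function is uniformly continuous. Smooth functions are dense in $C^b_{unif}$: mollification converges uniformly precisely because of uniform continuity. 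So any bounded extension must agree with this formula, which gives uniqueness.
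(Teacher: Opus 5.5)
Your argument is correct and is the standard route to this estimate: move the divergence onto the kernel, use scaling to get $\|\nabla K_t^\alpha\|_{L^1}=t^{-\frac{1}{2\alpha}}\|\nabla K^\alpha\|_{L^1}$ with $\nabla K^\alpha\in L^1$ (bounded near $0$, $O(|x|^{-N-1-2\alpha})$ at infinity), and extend by density; the paper gives no proof and simply cites Lemma 4.2 of \cite{ZZL}. One small repair is needed: $C_c^\infty$ is not dense in $C_{unif}^b(\mathbb{R}^N)$, since its uniform closure consists of functions vanishing at infinity, so you should state the identity $K_t^\alpha*\partial_j u_j=(\partial_jK_t^\alpha)*u_j$ for bounded $C^1$ fields with bounded derivatives (such as the mollifications you use at the end), where it still holds because $K_t^\alpha,\nabla K_t^\alpha\in L^1$ and $K_t^\alpha(z)=O(|z|^{-N-2\alpha})$ makes the boundary terms vanish.
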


\begin{lemma}\label{lemma:Lemma 2.5}
For every \( u \in C_{unif}^b(\mathbb{R}^N) \), we can deduce that
\begin{align*}
\bigl\lVert \nabla \mu_i (\Delta - \lambda_i I)^{-1} u \bigr\rVert_{L^\infty}
\leq \frac{\sqrt{N} \, \mu_i}{\sqrt{\lambda_i}} \bigl\lVert u^{k} \bigr\rVert_{L^\infty}, \, i = 1, 2.
\end{align*}
\end{lemma}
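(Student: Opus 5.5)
The plan is to write $(\Delta-\lambda_i I)^{-1}$ through the Bessel-type kernel coming from the heat semigroup, move the gradient onto that kernel, and bound the resulting integral in $L^\infty$. In the notation of the lemma, $\mu_i(\Delta-\lambda_i I)^{-1}u$ stands, up to sign, for the solution $v$ (resp.\ $w$) of the elliptic equation $0=\Delta v-\lambda_i v+\mu_i u^k$ in \eqref{1.1}. So I will prove that this solution satisfies $\|\nabla v\|_{L^\infty}\le \frac{\sqrt N\,\mu_i}{\sqrt{\lambda_i}}\|u^k\|_{L^\infty}$. Since $u\in C^b_{unif}(\mathbb{R}^N)$, also $u^k\in C^b_{unif}(\mathbb{R}^N)$, so everything is well defined.

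\textbf{Step 1 (representation).} For $f\in C^b_{unif}(\mathbb{R}^N)$, the unique bounded solution of $\lambda v-\Delta v=\mu f$ is
\[
v(x)=\mu\int_0^\infty e^{-\lambda t}\int_{\mathbb{R}^N}\frac{1}{(4\pi t)^{N/2}}e^{-\frac{|x-y|^2}{4t}}f(y)\,dy\,dt .
\]
This follows from the Laplace transform of the (classical, $\alpha=1$) heat semigroup; the integral converges absolutely because $f$ is bounded.

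\textbf{Step 2 (differentiate under the integral).} The Gaussian factor gives $|\partial_{x_j}G_t(z)|=\frac{|z_j|}{2t}G_t(z)$. A one-dimensional computation yields $\int_{\mathbb{R}^N}|\partial_{x_j}G_t(z)|\,dz=\frac{1}{\sqrt{\pi t}}$. Hence
\[
|\partial_{x_j}v(x)|\le \mu\|f\|_{L^\infty}\int_0^\infty \frac{e^{-\lambda t}}{\sqrt{\pi t}}\,dt=\mu\|f\|_{L^\infty}\frac{\Gamma(1/2)}{\sqrt{\pi}\sqrt{\lambda}}=\frac{\mu}{\sqrt{\lambda}}\|f\|_{L^\infty}.
\]
The differentiation under the integral sign is justified by dominated convergence, since the $t$-integrand is $O(t^{-1/2}e^{-\lambda t})$, which is integrable near both $t=0$ and $t=\infty$.

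\textbf{Step 3 (conclude).} Combining the components gives $|\nabla v|=\big(\sum_j|\partial_{x_j}v|^2\big)^{1/2}\le\sqrt N\max_j|\partial_{x_j}v|$, and Step 2 then yields the stated bound with $f=u^k$.

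The only delicate point is the interchange of derivative and integrals and the exact evaluation of the $L^1$ norm of $\partial_{x_j}G_t$. I expect this to be routine because of the integrable singularity $t^{-1/2}$. Alternatively, I could cite the analogous estimate from \cite{BW} or \cite{JLL} for the linear case and simply replace $u$ by $u^k$.
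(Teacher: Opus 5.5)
Your proposal is correct and is essentially the paper's argument. The paper defers to Lemma 3.3 of the work it cites, and in the proof of Lemma~\ref{le4.1.0} it carries out the same computation: the heat-semigroup representation of $(\lambda_i I-\Delta)^{-1}$, then $\|\partial_{x_j}v\|_{L^\infty}\le \mu_i\lambda_i^{-1/2}\|u^k\|_{L^\infty}$ via $\int_0^\infty e^{-\lambda_i s}s^{-1/2}\,ds=\sqrt{\pi/\lambda_i}$, then the factor $\sqrt N$; you also correctly read the left-hand side as acting on $u^k$. The only loose spot is your dominated-convergence justification, since $O(t^{-1/2}e^{-\lambda t})$ bounds the $y$-integral at fixed $x$ rather than giving a dominating function uniform in $x$; truncating to $t\ge\epsilon$ and letting $\epsilon\to0$, or writing the difference quotient via Fubini and the fundamental theorem of calculus, makes it rigorous.
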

\begin{proof}
Similar to the argument in the proof of Lemma 3.3 in \cite{BW1}, we omit the details here.
\end{proof}

\begin{definition}[\cite{PR}]
For any real-valued function \( f \) on \( \mathbb{R}^N \), define
\[
K_f^{2\alpha}(r) := \sup_{x \in \mathbb{R}^N} \int_{B_r(x)} \frac{|f(y)|}{|x - y|^{N + 1 - 2\alpha}} dy, \text{ for } r > 0,
\]
where \( B_r(x) \) denotes the open ball centered at \( x \in \mathbb{R}^N \) with radius \( r \). Then \( f \) is said to belong to the Kato class \( K^{2\alpha - 1} \) if \( \lim_{r \to 0} K_f^{2\alpha}(r) = 0 \).
\label{def: Dedinition 1} 
\end{definition}

\noindent \textbf{Proof of Theorem 1.1.} We introduce a linear normed space \( Q = C^b_{\textit{unif}}(\mathbb{R}^N \times [0,T]) \) equipped with the norm
\[
\| u \|_Q := \sum_{k=1}^\infty \frac{1}{2^k} \| u \|_{L^\infty([-r,r] \times [0,T])},
\]
where \( T \) is a positive real number.
We define a subset \( Q_0 \) of \( Q \) as
\[
Q_0 := \left\{ u \in Q \,\bigg|\, \ 0 \leq u(x,t) \leq C_0, u(\cdot, 0) = u_0 \right\}.
\]
For any $u_0 \in Q_0$, we have $\|u\|_Q \leq C_0$. Given that the initial data $u_0 \in Q_0$ satisfies $\inf_{x \in \mathbb{R}^N} u_0(x) > 0$, the classical solution $(u, v, w)$ to \eqref{1.1} exists for $t \in [0, T_{\text{max}})$. For any $x \in \mathbb{R}^N$, we have
\begin{equation*}
\begin{split}
u_t &= -(-\Delta)^\alpha u + \nabla(\chi_2 w - \chi_1 v) \nabla u \\
&\quad + u\left( a + \chi_2 \lambda_2 w - \chi_1 \lambda_1 v - b u^{\gamma-1} + (\chi_1 \mu_1 - \chi_2 \mu_2) u^k \right),
\end{split}
\end{equation*}
where \( v, w \) satisfies $v = (\lambda_1 I - \Delta)^{-1} \mu_1 u^k,~~w = (\lambda_2 I - \Delta)^{-1} \mu_2 u^k $.

Denote by \( U(x,t,u):=U(x,t) \) the solution of the following equation
\begin{equation*}
\begin{cases}
\begin{aligned}
U_t &= -(-\Delta)^\alpha U + \nabla(\chi_2 w - \chi_1 v) \nabla U \\
&\quad + U\bigl( a + \chi_2 \lambda_2 w - \chi_1 \lambda_1 v - b U^{\gamma-1} + (\chi_1 \mu_1 - \chi_2 \mu_2) U^k \bigr),
\end{aligned} \\[6pt]
U(\cdot, 0) = u_0.
\end{cases}
\end{equation*}
For every \( u \in Q_0 \), we derive that
\begin{align*}
(\chi_2\lambda_2w - \chi_1\lambda_1v)(x,t) &= \int_0^\infty \int_{\mathbb{R}^N}
\frac{e^{-\frac{|x-z|^2}{4s}}}{(4\pi s)^{\frac{N}{2}}}
\left(
\chi_2\lambda_2\mu_2 e^{-\lambda_2 s}
- \chi_1\lambda_1\mu_1 e^{-\lambda_2 s}
\right. \\
&\quad \left.
+ \chi_1\lambda_1\mu_1 e^{-\lambda_2 s}
- \chi_1\lambda_1\mu_1 e^{-\lambda_1 s}
\right)
u^k(z,t) \, dz \, ds \\
&\leq (\chi_2\lambda_2\mu_2 - \chi_1\lambda_1\mu_1)_+ C_0^k
\int_0^\infty \int_{\mathbb{R}^N}
\frac{e^{-\frac{|x-z|^2}{4s}}}{(4\pi s)^{\frac{N}{2}}}
e^{-\lambda_2 s} \, dz \, ds \\
&\quad + \chi_1\lambda_1\mu_1 C_0^k
\int_0^\infty \int_{\mathbb{R}^N}
\frac{e^{-\frac{|x-z|^2}{4s}}}{(4\pi s)^{\frac{N}{2}}}
\left( e^{-\lambda_2 s} - e^{-\lambda_1 s} \right)_+ \, dz \, ds \\
&\leq \frac{C_0^k}{\lambda_2}
\left[
(\chi_2\lambda_2\mu_2 - \chi_1\lambda_1\mu_1)_+
+ \chi_1\lambda_1\mu_1 (\lambda_1 - \lambda_2)_+
\right].
\end{align*}
Similarly, we have
\begin{align*}
(\chi_2\lambda_2 w - \chi_1\lambda_1 v)(x,t) \leq \frac{C_0^k}{\lambda_1} \left[ (\chi_2\lambda_2\mu_2 - \chi_1\lambda_1\mu_1)_+ + \chi_2\lambda_2\mu_2 (\lambda_1 - \lambda_2)_+ \right].
\end{align*}
\noindent Hence, we have \( (\chi_2\lambda_2 w - \chi_1\lambda_1v)(x,t) \leq M C_0^k \), where \( M \) is given in \cref{thm:mytheorem1.2}.
\noindent Then for any \( u \in Q_0 \), we obtain
\begin{equation*}
\begin{aligned}
U_t \leq -(-\Delta)^\alpha U + \nabla(\chi_2 w - \chi_1v) \nabla U + U\left( a + M C_0^k - b U^{\gamma-1} + (\chi_1\mu_1 - \chi_2\mu_2) U^k \right).
\end{aligned}
\end{equation*}

Next, we will show that \( U(x,t) \in Q_0 \). We divide the proof into four cases.

\noindent{\textbf{Case 1}. \( \gamma \geq k+1 \)}.

Let \( C_0 := \max\left\{1, \| u_0 \|_{L^\infty}, \left( \frac{a}{b + \chi_2\mu_2 - \chi_1\mu_1 - M} \right)^{\frac{1}{k}} \right\} \). Since \( b + \chi_2\mu_2 - \chi_1\mu_1 - M > 0 \), we have
\[
a + (M + \chi_1\mu_1 - \chi_2\mu_2)C_0^k - b C_0^{\gamma - 1} \leq a + (M + \chi_1\mu_1 - \chi_2\mu_2 - b)C_0^k \leq 0.
\]
For any $u \in Q_0$, it follows that
\[
U(x,t) \leq C_0
\]
for all $t \in [0,T]$, $x \in \mathbb{R}^N$.\\
\textbf{Case 2.} \( \gamma \leq k + 1 \).

Let \( C_0 := \max\left\{ 1, \| u_0 \|_{L^\infty}, \left( \frac{a}{b} \right)^{\frac{1}{\gamma - 1}} \right\} \). Due to \( \chi_2\lambda_2\mu_2 > \chi_1\lambda_1\mu_1 \) and \( \lambda_1 > \lambda_2 \), we can derive that \( M = \chi_2\mu_2 - \chi_1\mu_1 \) and
\[
a + (M + \chi_1\mu_1 - \chi_2\mu_2)C_0^k - bC_0^{\gamma - 1} = a - bC_0^{\gamma - 1} \leq 0.
\]
For any $u \in Q_0$, we can deduce
\[
U(x,t) \leq C_0
\]
for all $t \in [0,T]$, $x \in \mathbb{R}^N$.

\noindent \(\textbf{Case 3.}\) \( \gamma \leq k + 1 \).

Let \( C_0 := \max\left\{ 1, \| u_0 \|_{L^\infty}, C_* \right\} \), where \( C_* \leq \left( \frac{b - a}{M + \chi_1\mu_1} \right)^{\frac{1}{k}} \). Since \( \| u_0 \|_{L^\infty} \leq \left( \frac{b - a}{M + \chi_1\mu_1} \right)^{\frac{1}{k}} \) and \( b > a + M + \chi_1\mu_1 \), we have
\[
a + (M + \chi_1\mu_1)C_0^k - bC_0^{\gamma - 1} \leq a + (M + \chi_1\mu_1)C_0^k - b \leq 0.
\]
For any $u \in Q_0$, using the comparison principle of parabolic equations, we have
\[
U(x,t) \leq C_0
\]
for all $t \in [0,T]$, $x \in \mathbb{R}^N$.

\noindent \(\textbf{Case 4.}\) \( \gamma \neq k + 1 \).

Let \( C_0 := \max\left\{ 1, \| u_0 \|_{L^\infty}, \left( \frac{a}{b} \right)^{\frac{1}{\gamma - 1}} \right\} \). Due to \( \chi_2\mu_2 = \chi_1\mu_1 \), \( \lambda_1 = \lambda_2 \), we have \( M = 0 \) and
\[
a + (M + \chi_1\mu_1 - \chi_2\mu_2)C_0^k - bC_0^{\gamma - 1} = a - bC_0^{\gamma - 1} \leq 0.
\]
For any $u \in Q_0$, we conclude from the comparison principle of parabolic equations that
\[
U(x,t) \leq C_0
\]
for all $t \in [0,T]$, $x \in \mathbb{R}^N$.
Thus, for any \( u \in Q_0 \), we have \( U(x,t) \in Q_0 \).

Next, we aim to prove that the mapping \( u \mapsto U(x,t) \in Q_0 \) is compact and continuous. To this end, we shall divide the proof into two steps.

\noindent \textbf{Step 1.} The mapping \( u \mapsto U(x,t) \in Q_0 \) is compact.

Let \( \{ u_n \}_{n \geq 1} \subset Q_0 \). For any \( n \geq 1 \), the mapping \( u \mapsto U(x,t) \in Q_0 \) satisfies
\begin{equation*}
\begin{cases}
\begin{aligned}
U_t(x,t,u_n) &= -(-\Delta)^\alpha U(x,t,u_n)
- \chi_1 \nabla \cdot \left( U(x,t,u_n) \nabla (\Delta - \lambda_1 I)^{-1}\mu_1 U^k(x,t,u_n) \right) \\
&\quad + \chi_2 \nabla \cdot \left( U(x,t,u_n) \nabla (\Delta - \lambda_2 I)^{-1}\mu_2 U^k(x,t,u_n) \right)\\
&\quad + U(x,t,u_n) \left( a - b U^{\gamma - 1}(x,t,u_n) \right),
\end{aligned} \\[6pt]
U(\cdot, 0, u_n) = u_0.
\end{cases}
\end{equation*}
Then we have
\begin{equation*}
\begin{aligned}
U(x,t,u_n) &= T(t)u_0 + \chi_1 \int_0^t T(t - \tau) \nabla \cdot \left( U(x,\tau, u_n) \nabla (\Delta - \lambda_1 I)^{-1} \mu_1 U^k(x,\tau, u_n) \right) d\tau \\
&\quad - \chi_2 \int_0^t T(t - \tau) \nabla \cdot \left(U(x,\tau, u_n) \nabla (\Delta - \lambda_2 I)^{-1} \mu_2 U^k(x,\tau, u_n) \right) d\tau \\
&\quad + \int_0^t (a + 1) T(t - \tau) U(x,\tau, u_n) d\tau - b \int_0^t T(t - \tau) U^{\gamma - 1}(x,\tau, u_n) d\tau \\
&:= T(t)u_0 + L_1^n(t) - L_2^n(t) + L_3^n(t).
\end{aligned}
\end{equation*}

Firstly, for any \( 0 < t_1 < T \), we will prove the mapping \( [t_1, T] \ni t \mapsto U(x,t,u_n) \in X^\beta \) is uniformly bounded and equicontinuous if \( \frac{1}{2} < \alpha < 1 \) and \( 0 < \beta < \frac{1}{2} - \frac{1}{4\alpha} \).

For every \( 0 < t < T \), we have
\[
\| U(\cdot, t, u_n) \|_{X^\beta} \leq \| T(t)u_0 \|_{X^\beta} + \| L_1^n(t) \|_{X^\beta} + \| L_2^n(t) \|_{X^\beta} + \| L_3^n(t) \|_{X^\beta}.
\]
Applying \eqref{0.0.2}, \cref{lemma:Lemma 2.1} and \cref{lemma:Lemma 2.5}, we get
\begin{align*}
\| L_1^n(t) \|_{X^\beta}
&= \chi_1\int_0^t \| \left( (-\Delta)^\alpha + I \right)^{\beta} T(t - \tau) \nabla \cdot \left( U(\tau) \nabla (\Delta - \lambda_1 I)^{-1} \mu_1 U^k(\tau) \right) \|_{L^\infty} d\tau \\
&\leq \chi_1C_1 M_{\beta} C_0 \int_0^t (t - \tau)^{-\beta - \frac{1}{2\alpha}} e^{-(t - \tau)} \| U(\tau) \nabla (\Delta - \lambda_1 I)^{-1} \mu_1 U^k(\tau) \|_{L^\infty} d\tau \\
&\leq \frac{C_1 M_{\beta} C_0^{k+1} \chi_1 \mu_1 \sqrt{N}}{\sqrt{\lambda_1}} \Gamma \left( 1 - \beta - \frac{1}{2\alpha} \right).
\end{align*}
Similar to \( L_1^n(t) \), we conclude that
\begin{align*}
\| L_2^n(t) \|_{X^\beta} &\leq \frac{C_1 M_{\beta} C_0^{k+1} \chi_2 \mu_2 \sqrt{N}}{\sqrt{\lambda_2}} \Gamma \left( 1 - \beta - \frac{1}{2\alpha} \right)
\end{align*}
and
\begin{align*}
\| L_3^n(t) \|_{X^\beta}
&\leq M_\beta C_0 \left[ (a + 1) + b C_0^{\gamma - 1} \right] \Gamma (1 - \beta).
\end{align*}
Since \( \frac{1}{2} < \alpha < 1 \) and \( 0 < \beta < \frac{1}{2} - \frac{1}{4\alpha} \), we know that \( \Gamma(1 - \beta) \) and \( \Gamma\left( 1 - \beta - \frac{1}{2\alpha} \right) \) are bounded.
From an elementary caculation, we can deduce that
\[
\| T(t) u_0 \|_{X^\beta} \leq M_\beta C_0 {t_1}^{-\beta} e^{t_1}.
\]
Then, combining the above inequalities, we conclude that
\begin{equation*}
\begin{aligned}
\sup_{t_1 \leq t \leq T} \| U(\cdot, t, u_n) \|_{X^\beta} &\leq M_\beta C_0^{k+1}C_1 \sqrt{N} \left( \frac{\chi_1 \mu_1}{\sqrt{\lambda_1}} + \frac{\chi_2 \mu_2}{\sqrt{\lambda_2}} \right) \Gamma \left( 1 - \beta - \frac{1}{2\alpha} \right) \\
&\quad + M_\beta C_0 \left[ (a + 1) + b C_0^{\gamma - 1} \right] \Gamma (1 - \beta) + M_\beta C_0 {t_1}^{-\beta} e^{-t_1}.
\end{aligned}
\end{equation*}
Next, let \( t_1 \leq t + h \leq T \). According to \eqref{0.0.1}, \cref{lemma:Lemma 2.1} and \cref{lemma:Lemma 2.5}, we have
\begin{align*}
\| T(t + h)u_0 - T(t)u_0 \|_{X^\beta} &\leq C(\beta) h^\beta t_1^{-\beta} e^{-t_1} \| u_0 \|_{L^\infty}.
\end{align*}
Applying \eqref{0.0.1}, \eqref{0.0.2}, \cref{lemma:Lemma 2.1} and \cref{lemma:Lemma 2.5}, we have
\begin{align}\label{2.8}
&\quad \| L_1^n(t + h) - L_1^n(t) \|_{X^\beta} \nonumber\\
&= \chi_1\int_0^t \| (T(h) - I) T(t - \tau) \nabla \cdot \left( U(\tau) \nabla (\Delta - \lambda_1 I)^{-1} \mu_1 U^k(\tau) \right) \|_{X^\beta} d\tau \nonumber\\
&\quad + \chi_1 \int_t^{t + h} \| T(t + h - \tau) \nabla \cdot \left( U(\tau) \nabla (\Delta - \lambda_1 I)^{-1} \mu_1 U^k(\tau) \right) \|_{X^\beta} d\tau \nonumber\\
&\leq C_\beta h^\beta \chi_1\int_0^t \| \left( (-\Delta)^\alpha - I \right)^{2\beta} T(t - \tau) \nabla \cdot \left( U(\tau) \nabla (\Delta - \lambda_1 I)^{-1} \mu_1 U^k(\tau) \right) \|_{L^\infty} d\tau \nonumber\\
&\quad + C_1 M_\beta \chi_1\int_t^{t + h} (t + h - \tau)^{-\beta - \frac{1}{2\alpha}} e^{-(t + h - \tau)} \| U(\tau) \nabla (\Delta - \lambda_1 I)^{-1} \mu_1 U^k(\tau) \|_{L^\infty} d\tau \nonumber\\
&\leq \frac{C_1 M_\beta \chi_1\mu_1 \sqrt{N}}{\sqrt{\lambda_1}} C_0^{k + 1} \left[C_\beta h^\beta \Gamma \left( 1 - 2\beta - \frac{1}{2\alpha} \right) + \frac {h^{1-\beta-\frac{1}{2\alpha}}}{1-\beta-\frac{1}{2\alpha}} \right].
\end{align}
Similarly, the same procedure to \eqref{2.8} yields
\[
\| L_2^n(t + h) - L_2^n(t) \|_{X^\beta} \leq \frac{C_1 M_\beta \chi_2 \mu_2 \sqrt{N}}{\sqrt{\lambda_2}} C_0^{k + 1} \left[C_\beta h^\beta \Gamma \left( 1 - 2\beta - \frac{1}{2\alpha} \right) + \frac {h^{1-\beta-\frac{1}{2\alpha}}}{1-\beta-\frac{1}{2\alpha}} \right]
\]
and
\begin{align*}
\| L_3^n(t+h) - L_3^n(t) \|_{X^\beta}
\le M_\beta C_0 \bigl[ (a+1) + b C_0^{\gamma-1} \bigr] \Bigl[ C_\beta h^\beta \Gamma(1-2\beta) + \frac{h^{1-\beta}}{1-\beta} \Bigr].
\end{align*}
Consequently, there is a constant \( C \) for which
\begin{equation*}
\begin{aligned}
\| U(\cdot, t + h, u_n) - U(\cdot, t, u_n) \|_{X^\beta} \leq C \left( h^\beta + h^{1 - \beta} + h^{1-\beta - \frac{1}{2\alpha}} \right).
\end{aligned}
\end{equation*}
Using the Arzel\`{a}-Ascoli theorem together with the continuous embedding $X^\beta \hookrightarrow C^\nu$ for \( 0 \leq \nu \leq 2\beta \), we conclude that there exists a function \( U \in C^{2,1}(\mathbb{R}^N \times (0, T]) \) and a subsequence \( \{ u_{n_j} \}_{j \geq 1} \) of \( \{ u_n \}_{n \geq 1} \) such that \( U(\cdot, t, u_{n_j}) \) converges locally uniformly to \( U \in C^{2,1}(\mathbb{R}^N \times (0, T]) \).

Secondly, we prove that the function \( \Phi(u) \) is continuous at the initial time \( t = 0 \).

Fix a positive number \( \varepsilon > 0 \). Then there exists \( 0 < t_\varepsilon < T \) such that
\begin{align*}
\| T(t)u_0 - u_0 \|_{L^\infty} &+ C_1 M_\beta C_0^{k+1} \sqrt{N} \left( \frac{\chi_1 \mu_1}{\sqrt{\lambda_1}} + \frac{\chi_2 \mu_2}{\sqrt{\lambda_2}} \right) \Gamma \left( 1 - \beta - \frac{1}{2\alpha} \right) \\
&+ M_\beta C_0 \left( (a + 1) + b C_0^{\gamma - 1} \right)\Gamma(1-\beta) + 2 M_\beta C_0^{-\beta} e^{-t_1} < \varepsilon
\end{align*}
for any \( t \leq t_\varepsilon \). Choosing \( \beta = 0 \), we get
\[
\begin{aligned}
\| U(x,t, u_{n_j}) - u_0 \|_{L^\infty} &= \| U(x,t, u_{n_j}) - T(t)u_0 + T(t)u_0 - u_0 \|_{L^\infty} \\
&\leq \| U(x,t, u_{n_j}) \|_{L^\infty} + \| T(t)u_0 \|_{L^\infty} + \| T(t)u_0 - u_0 \|_{L^\infty} \leq \varepsilon
\end{aligned}
\]
for any \( t \leq t_\varepsilon \). Let \( n \to \infty \),
\[
\| U(\cdot, t) - u_0 \|_{L^\infty} \leq \varepsilon
\]
for any \( t \leq t_\varepsilon \).

\noindent \textbf{Step 2.} The mapping \( u \mapsto U(x,t,u) \in Q_0\) is continuous.

Suppose the mapping \( u \mapsto U(x,t,u) \in Q_0 \) is discontinuous. Assume that \( \{ u_n \}_{n \geq 1} \subset Q_0 \) and \( u \in Q_0 \) satisfy \( \| u_n - u \|_{L^\infty} \to 0 \), \( n \to 0 \). Then there exist \( \delta > 0 \) and a subsequence \( \{ u_{n_j} \}_{j \geq 1} \) of \( \{ u_n \}_{n \geq 1} \) satisfying
\[
\| U(x,t, u_{n_j}) - U(x,t, u) \|_{L^\infty} \geq \delta
\]
for all $n \geq 1$.
As shown in Step 1, the mapping \( u \mapsto U(x,t,u) \in Q_0 \) is compact. Thus, there exist a subsequence \( \{ u_{n_j} \}_{j \geq 1} \) of \( \{ u_n \}_{n \geq 1} \) and \( U(x,t,u) \in Q \cap C^{2,1}(\mathbb{R}^N \times (0,T]) \) such that in \( C_{loc}^{2,1}(\mathbb{R}^N \times (0,T]) \), as \( n \to \infty \), \( U(x,t,u_{n_j}) \) converges to \( U(x,t,u) \). However, for each \( n \geq 1 \), we have
\begin{align}\label{2.11}
U_t(x,t,u_{n_j}) = &-(-\Delta)^\alpha U(x,t,u_{n_j}) - \chi_1 \nabla \cdot \left( U(x,t,u_{n_j}) \nabla (\Delta - \lambda_1 I)\mu_1 U^k(x,t,u_{n_j}) \right) \nonumber\\
&+ \chi_2 \nabla \cdot \left( U(x,t,u_{n_j}) \nabla (\Delta - \lambda_2 I)\mu_2 U^k(x,t,u_{n_j}) \right)\nonumber\\
&+ U(x,t,u_{n_j}) \left( a - b U^{\gamma - 1}(x,t,u_{n_j}) \right).
\end{align}
Let \( n \to \infty \) in \eqref{2.11}, it follows that
\begin{align}\label{2.12}
\begin{cases}
U_t(\cdot,t) = -(-\Delta)^\alpha U(\cdot,t)
- \chi_1 \nabla \cdot \left( U(\cdot,t) \nabla (\Delta - \lambda_1 I)^{-1} \mu_1 U^k(\cdot,t) \right) \\
~\qquad \qquad + \chi_2 \nabla \cdot \left( U(\cdot,t) \nabla (\Delta - \lambda_2 I)^{-1} \mu_2 U^k(\cdot,t) \right)
+ U(\cdot,t) \left( a - b U^{\gamma - 1}(\cdot,t) \right), \\
U(\cdot, 0) = u_0.
\end{cases}
\end{align}
Since \( U(\cdot,\cdot, u) \) is the unique bounded classical solution of \eqref{2.12}, we have \( U(\cdot,\cdot) = U(\cdot,\cdot, u) \). Then,
\[
\lim_{n \to \infty} \| U(\cdot,\cdot, u_{n_j}) - U(\cdot,\cdot, u) \|_Q = 0,
\]
which is a contradiction.

Combining with Step 1 and Step 2, we conclude that the mapping \( u \mapsto U(x,t,u) \in Q_0 \) is both compact and continuous. By using the Schauder fixed point theorem, we can know that there exists a fixed point \( u^* \). In light of the local existence of solutions, it holds that \( T_{\text{max}} \geq T \) with \( u(\cdot,t, u_0) = u^* \). In view of thearbitrariness of \( T > 0 \), this implies \( T_{\text{max}} = \infty \). This completes the proof of \cref{thm:mytheorem1.2}.

\section{Asymptotic Behavior}
In this section, we analyze the long time behavior of solutions to \eqref{1.1} in two distinct cases : \( \gamma \neq k + 1 \) and \( \gamma = k + 1 \). Prior to proving \cref{thm:mytheorem1.3}, we first present the following lemma.

\begin{lemma}
Under the assumptions of \cref{thm:mytheorem1.2}, if $\displaystyle \inf_{x \in \mathbb{R}^N} u_0(x) > 0$, then $\displaystyle \inf_{x \in \mathbb{R}^N} u(x,t)$ remains positive.
\label{lemma:Lemma 4.1}
\end{lemma}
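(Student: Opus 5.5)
We plan to prove the lemma with a comparison argument on the nondivergence form of the $u$-equation derived in the proof of \cref{thm:mytheorem1.2}:
\begin{equation*}
u_t = -(-\Delta)^\alpha u + \nabla(\chi_2 w - \chi_1 v)\cdot\nabla u + u\,c(x,t),
\end{equation*}
where
\begin{equation*}
c(x,t) := a + \chi_2\lambda_2 w - \chi_1\lambda_1 v - b u^{\gamma-1} + (\chi_1\mu_1-\chi_2\mu_2)u^k .
\end{equation*}
The first step uses \cref{thm:mytheorem1.2}. It gives $0\le u\le C_0$ on $\mathbb{R}^N\times[0,\infty)$. Since $v=(\lambda_1 I-\Delta)^{-1}\mu_1u^k$ and $w=(\lambda_2 I-\Delta)^{-1}\mu_2u^k$, we also get $0\le v\le \mu_1C_0^k/\lambda_1$ and $0\le w\le \mu_2C_0^k/\lambda_2$. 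These bounds imply that $c$ is bounded from below:
\begin{equation*}
c(x,t)\ge -L,\qquad L:=\chi_1\mu_1C_0^k + bC_0^{\gamma-1} + (\chi_2\mu_2-\chi_1\mu_1)_+C_0^k .
\end{equation*}
The nonnegative terms $a$ and $\chi_2\lambda_2 w$ are simply dropped. By \cref{lemma:Lemma 2.5}, the drift $\nabla(\chi_2w-\chi_1v)$ is bounded, so the equation is a linear fractional parabolic equation with bounded coefficients.

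Next we build a spatially constant subsolution. We set $\underline u(t):=\delta e^{-Lt}$ with $\delta:=\inf_{x}u_0(x)>0$. Because $\underline u$ does not depend on $x$, we have $(-\Delta)^\alpha\underline u=0$ and $\nabla\underline u=0$. Therefore
\begin{equation*}
\underline u_t - \bigl[-(-\Delta)^\alpha\underline u+\nabla(\chi_2w-\chi_1v)\cdot\nabla\underline u+\underline u\,c\bigr] = -\underline u\,(L+c)\le 0 .
\end{equation*}
This shows that $\underline u$ is a subsolution of the linear equation satisfied by $u$, where $c$ is treated as a given bounded coefficient. Since $\underline u(0)=\delta\le u_0$, the comparison principle for bounded classical solutions in $C_{unif}^b(\mathbb{R}^N)$ gives $u(x,t)\ge \delta e^{-Lt}>0$. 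Hence $\inf_x u(\cdot,t)>0$ for every finite $t$, which is the claim. If a time-uniform positive bound is wanted instead, a refined subsolution handles the region where $u$ is small. Note that $c\ge a+\chi_2\lambda_2w-\chi_1\lambda_1 v - o(1)$ as $u\to 0$. One would then try a constant $\underline u=\eta$ for $\eta$ small under the balance assumptions, but the lemma as stated only needs positivity at each time.

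The main obstacle is justifying the comparison principle on the unbounded domain with the nonlocal operator. A minimum of $u-\underline u$ may fail to be attained, and $(-\Delta)^\alpha$ is nonlocal. We will handle this in the standard way. The function $e^{Kt}(u-\underline u)$ is treated through the mild formulation, using the semigroup $T(t)$ together with \cref{lemma:Lemma 2.1} and the boundedness of the drift and of $c$. Alternatively, one penalizes by $\epsilon(1+|x|^2)^{\theta/2}$ with $\theta<2\alpha$, so that a negative minimum is attained. At such a minimum point, $(-\Delta)^\alpha$ of the difference is $\le 0$ and its gradient vanishes, and a contradiction follows as $\epsilon\to 0$.
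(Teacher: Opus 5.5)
Your proposal is correct and is essentially the paper's argument: the comparison principle applied to the nondivergence form of the $u$-equation, with a spatially homogeneous subsolution started at $\inf_x u_0>0$. The only difference is that the paper freezes just $Z=\chi_1\lambda_1\sup v$ and compares $u$ with the solution $\bar w$ of the logistic-type ODE $\bar w_t=\bar w\bigl(a-Z-b\bar w^{\gamma-1}+(\chi_1\mu_1-\chi_2\mu_2)\bar w^k\bigr)$. You instead freeze the whole reaction coefficient and use the explicit subsolution $\delta e^{-Lt}$, which is simpler and avoids any question of whether that comparison ODE stays bounded.
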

\begin{proof}
Let $Z := \chi_1 \lambda_1 \sup_{x \in \mathbb{R}^N} v(x,t)$. Since
\[ u_t = -(-\Delta)^\alpha u + \nabla (\chi_2 w - \chi_1 v) \nabla u + u \left( a + (\chi_2 \lambda_2 w - \chi_1 \lambda_1 v) - b u^{\gamma - 1} + (\chi_1 \mu_1 - \chi_2 \mu_2) u^k \right), \]
we can obtain
\[ u_t \geq -(-\Delta)^\alpha u + \nabla (\chi_2 w - \chi_1 v) \nabla u + u \left( a - Z - b u^{\gamma - 1} + (\chi_1 \mu_1 - \chi_2 \mu_2) u^k \right). \]

Let \( \bar{w} \) stand for the solution to the ODE given below
\[
\begin{cases}
\bar{w}_t = \bar{w} \left( a - Z - b \bar{w}^{\gamma - 1} + (\chi_1 \mu_1 - \chi_2 \mu_2) \bar{w}^k \right), \\
\bar{w}(0) = \inf_{x \in \mathbb{R}^N} u_0(x).
\end{cases}
\]
According to the comparison principle, it can be obtained that $u(x,t) \geq \bar{w}(t)$ for all $x \in \mathbb{R}^N$ and $t>0$.
Since
\[
\begin{cases}
b + \chi_2 \mu_2 - \chi_1 \mu_1 > 0, & \text{if } \gamma = k + 1, \\
b > 0, & \text{if } \gamma \neq k + 1
\end{cases}
\]
and \( \inf_{x \in \mathbb{R}^N} u_0(x) > 0 \), we can get \( \bar{w}(t) > 0 \), which implies that \( 0 < \bar{w}(t) \leq \inf_{x \in \mathbb{R}^N} u(x,t) \) for every \( t \geq 0 \).
\end{proof}
Now, we define \( \bar{u} = \limsup_{t \to \infty} \inf_{x \in \mathbb{R}^N} u(x,t) \), \( \underline{u} = \liminf_{t \to \infty} \inf_{x \in \mathbb{R}^N} u(x,t) \). Then for any \( \varepsilon > 0 \), we can choose \( \tilde{T}_\varepsilon > 0 \) satisfying
\[
\mu_1 (\underline{u} - \varepsilon)^k \leq \lambda_1 v(x,t) \leq \mu_1 (\bar{u} + \varepsilon)^k,~~\mu_2 (\underline{u} - \varepsilon)^k \leq \lambda_2 w(x,t) \leq \mu_2 (\bar{u} + \varepsilon)^k
\]
for all $t \geq \tilde{T}_\varepsilon$.

Now we turn to analyzing the asymptotic behavior of the global classical solution to \eqref{1.1} with \( \inf_{x \in \mathbb{R}^N} u_0(x) > 0 \).\\
\(\textbf{Proof of Theorem 1.2.}\) We divide the following proof into two cases.

\noindent When \( \gamma = k + 1 \).

(1) Assume \( b + \chi_2\mu_2 - \chi_1\mu_1 - \frac{1}{\lambda_2} \left[ |\chi_2\lambda_2\mu_2 - \chi_1\lambda_1\mu_1| + \chi_1\mu_1|\lambda_1 - \lambda_2| \right] > 0. \)

For any \( x \in \mathbb{R}^N, t \geq \tilde{T}_\varepsilon \), by simple calculation, we obtain that
\begin{align*}
(\chi_2\lambda_2 w - \chi_1\lambda_1 v)(x,t) &= (\chi_2\lambda_2\mu_2 - \chi_1\lambda_1\mu_1) \int_0^\infty \int_{\mathbb{R}^N} e^{-\lambda_2 s} \frac{e^{-\frac{|x - z|^2}{4s}}}{(4\pi s)^{\frac{N}{2}}} u^k(z,t) dz ds \\
&\quad + \chi_1\lambda_1\mu_1 \int_0^\infty \int_{\mathbb{R}^N} (e^{-\lambda_2 s} - e^{-\lambda_1 s}) \frac{e^{-\frac{|x - z|^2}{4s}}}{(4\pi s)^{\frac{N}{2}}} u^k(z,t) dz ds \\
&\leq \frac{1}{\lambda_2} \left[ (\chi_2\lambda_2\mu_2 - \chi_1\lambda_1\mu_1)_+ + \chi_1\mu_1(\lambda_1 - \lambda_2)_+ \right] (\bar{u} + \varepsilon)^k \\
&\quad - \frac{1}{\lambda_2} \left[ (\chi_2\lambda_2\mu_2 - \chi_1\lambda_1\mu_1)_+ + \chi_1\mu_1(\lambda_1 - \lambda_2)_+ \right] (\underline{u} - \varepsilon)^k.
\end{align*}
Thus, by applying the comparison principle for fractional parabolic equations, we can obtain that
\begin{equation}\label{4.4}
\begin{aligned}
u(x,t) \leq \hat{U}(t)
\end{aligned}
\end{equation}
for all $x \in \mathbb{R}^N$ and $t \geq \tilde{T}_\varepsilon$,
where \( \hat{U} \) be the solution of the following ODE
\begin{equation*}
\left\{
\begin{aligned}
&\hat{U}_t = \bigg( a
+ \frac{1}{\lambda_2} \Bigl[ (\chi_2\lambda_2\mu_2 - \chi_1\lambda_1\mu_1)_+
+ \chi_1\mu_1(\lambda_1 - \lambda_2)_+ \Bigr] (\bar{u} + \varepsilon)^k \\
&\qquad - \frac{1}{\lambda_2} \Bigl[ (\chi_2\lambda_2\mu_2 - \chi_1\lambda_1\mu_1)_+
+ \chi_1\mu_1(\lambda_1 - \lambda_2)_+ \Bigr] (\underline{u} - \varepsilon)^k \\
&\qquad - (b + \chi_2\mu_2 - \chi_1\mu_1) \hat{U}^{\gamma - 1} \bigg) \hat{U}, \\[6pt]
&\hat{U}(\tilde{T}_\varepsilon) = \bigl\| u(\cdot, \tilde{T}_\varepsilon) \bigr\|_{L^\infty}.
\end{aligned}
\right.
\end{equation*}
Since \( b + \chi_2\mu_2 - \chi_1\mu_1 > 0 \), \( \| u(\cdot, \hat{T}_\varepsilon) \|_{L^\infty} > 0 \), it yields that
\begin{equation}\label{4.5}
\begin{aligned}
\lim_{t \to \infty} \hat{U} &\leq \frac{1}{\left( b + \chi_2\mu_2 - \chi_1\mu_1 \right)^{\frac{1}{\gamma - 1}}} \left\{ a + \frac{1}{\lambda_2} \left[ (\chi_2\lambda_2\mu_2 - \chi_1\lambda_1\mu_1)_+ + \chi_1\mu_1(\lambda_1 - \lambda_2)_+ \right] (\bar{u} + \varepsilon)^k \right. \\
&\quad \left. - \frac{1}{\lambda_2} \left[ (\chi_2\lambda_2\mu_2 - \chi_1\lambda_1\mu_1)_+ + \chi_1\mu_1(\lambda_1 - \lambda_2)_+ \right] (\underline{u} - \varepsilon)^k \right\}^{\frac{1}{\gamma - 1}}.
\end{aligned}
\end{equation}
Combining \eqref{4.4} with \eqref{4.5} and letting \( \varepsilon \to 0 \) yields that
\begin{equation}\label{4.7}
\begin{aligned}
\bar{u} &\leq \frac{1}{\left( b + \chi_2\mu_2 - \chi_1\mu_1 \right)^{\frac{1}{\gamma - 1}}} \left\{ a + \frac{1}{\lambda_2} \left[ (\chi_2\lambda_2\mu_2 - \chi_1\lambda_1\mu_1)_+ + \chi_1\mu_1(\lambda_1 - \lambda_2)_+ \right] \bar{u}^k \right. \\
&\quad \left. - \frac{1}{\lambda_2} \left[ (\chi_2\lambda_2\mu_2 - \chi_1\lambda_1\mu_1)_+ + \chi_1\mu_1(\lambda_1 - \lambda_2)_+ \right] \underline{u}^k \right\}^{\frac{1}{\gamma - 1}}.
\end{aligned}
\end{equation}
If
\begin{equation*}
\begin{aligned}
\big\{&a + \frac{1}{\lambda_2} \left[ (\chi_2\lambda_2\mu_2 - \chi_1\lambda_1\mu_1)_+ + \chi_1\mu_1(\lambda_1 - \lambda_2)_+ \right] \bar{u}^k \\
&- \frac{1}{\lambda_2} \left[ (\chi_2\lambda_2\mu_2 - \chi_1\lambda_1\mu_1)_+ + \chi_1\mu_1(\lambda_1 - \lambda_2)_+ \right] \underline{u}^k\big\}^{\frac{1}{\gamma - 1}}=0,
\end{aligned}
\end{equation*}
we can know \( \underline{u} = \bar{u} = 0 \). It follows that
\begin{align*}
0 &= \left\{ a + \frac{1}{\lambda_2} \left[ (\chi_2\lambda_2\mu_2 - \chi_1\lambda_1\mu_1)_+ + \chi_1\mu_1(\lambda_1 - \lambda_2)_+ \right] \bar{u}^k \right. \\
&\quad \left. - \frac{1}{\lambda_2} \left[ (\chi_2\lambda_2\mu_2 - \chi_1\lambda_1\mu_1)_+ + \chi_1\mu_1(\lambda_1 - \lambda_2)_+ \right] \underline{u}^k \right\}^{\frac{1}{\gamma - 1}} = a > 0,
\end{align*}
which is impossible. Therefore, we have
\[
\begin{aligned}
\bar{u} &\leq \frac{1}{\left( b + \chi_2\mu_2 - \chi_1\mu_1 \right)^{\frac{1}{\gamma - 1}}} \left\{ a + \frac{1}{\lambda_2} \left[ (\chi_2\lambda_2\mu_2 - \chi_1\lambda_1\mu_1)_+ + \chi_1\mu_1(\lambda_1 - \lambda_2)_+ \right] \bar{u}^k \right. \\
&\quad \left. - \frac{1}{\lambda_2} \left[ (\chi_2\lambda_2\mu_2 - \chi_1\lambda_1\mu_1)_+ + \chi_1\mu_1(\lambda_1 - \lambda_2)_+ \right] \underline{u}^k \right\}^{\frac{1}{\gamma - 1}}.
\end{aligned}
\]
Moreover, for every \( x \in \mathbb{R}^N, t \geq \tilde{T}_\varepsilon \), it can be inferred that
\begin{equation*}
\begin{aligned}
(\chi_2\lambda_2 w - \chi_1\lambda_1 v)(x,t) &\geq \frac{1}{\lambda_2} \left[ (\chi_2\lambda_2\mu_2 - \chi_1\lambda_1\mu_1)_+ + \chi_1\mu_1(\lambda_1 - \lambda_2)_+ \right] (\underline{u} - \varepsilon)^k \\
&\quad - \frac{1}{\lambda_2} \left[ (\chi_2\lambda_2\mu_2 - \chi_1\lambda_1\mu_1)_+ + \chi_1\mu_1(\lambda_1 - \lambda_2)_+ \right] (\bar{u} + \varepsilon)^k.
\end{aligned}
\end{equation*}
Therefore, we have that
\[
\begin{aligned}
u_t &\geq -(-\Delta)^\alpha u + \nabla (\chi_2\lambda_2 w - \chi_1\lambda_1 v) \nabla u\\
&\quad + \left( a + \frac{1}{\lambda_2} \left[ (\chi_2\lambda_2\mu_2 - \chi_1\lambda_1\mu_1)_+ + \chi_1\mu_1(\lambda_1 - \lambda_2)_+ \right] (\underline{u} - \varepsilon)^k \right. \\
&\quad \left. - \frac{1}{\lambda_2} \left[ (\chi_2\lambda_2\mu_2 - \chi_1\lambda_1\mu_1)_+ + \chi_1\mu_1(\lambda_1 - \lambda_2)_+ \right] (\bar{u} + \varepsilon)^k - (b + \chi_2\mu_2 - \chi_1\mu_1) u^{\gamma - 1} \right) u.
\end{aligned}
\]
Let \( \underline{U} \) be the solution of the following ODE
\begin{equation}\label{4.9}\allowdisplaybreaks
\begin{cases}
\underline{U}_t= \Big( a + \frac{1}{\lambda_2} \left[ (\chi_2\lambda_2\mu_2 - \chi_1\lambda_1\mu_1)_+ + \chi_1\mu_1(\lambda_1 - \lambda_2)_+ \right] (\underline{u} - \varepsilon)^k  \\
\qquad~~  - \frac{1}{\lambda_2} \left[ (\chi_2\lambda_2\mu_2 - \chi_1\lambda_1\mu_1)_+ + \chi_1\mu_1(\lambda_1 - \lambda_2)_+ \right] (\bar{u} + \varepsilon)^k \\
\qquad~~- (b + \chi_2\mu_2 - \chi_1\mu_1) \underline{U}^{\gamma - 1} \Big) \underline{U}, \\
\underline{U}(\tilde{T}_\varepsilon)= \inf_{x \in \mathbb{R}^N} u(\cdot, \tilde{T}_\varepsilon).
\end{cases}
\end{equation}
Thus, by \cref{lemma:Lemma 4.1}, we get \( \inf_{x \in \mathbb{R}^N} u(x,t) > 0 \). Furthermore, by applying the comparison principle for fractional parabolic equations, we conclude that
\begin{equation}\label{4.10}
u(x,t) \geq \underline{U}(t)
\end{equation}
for $x \in \mathbb{R}^N$ and $t \geq \tilde{T}_\varepsilon$. Since \( b + \chi_2\mu_2 - \chi_1\mu_1 > 0 \) and \( \| u(\cdot, \hat{T}_\varepsilon) \|_{L^\infty} > 0 \) for every \( t \geq \tilde{T}_\varepsilon \), combining with \eqref{4.9} and \eqref{4.10} and letting \( \varepsilon \to 0 \), we can deduce that
\begin{equation}\label{4.13}
\begin{aligned}
\underline{u} &\geq \frac{1}{\left( b + \chi_2\mu_2 - \chi_1\mu_1 \right)^{\frac{1}{\gamma - 1}}} \left\{ a + \frac{1}{\lambda_2} \left[ (\chi_2\lambda_2\mu_2 - \chi_1\lambda_1\mu_1)_+ + \chi_1\mu_1(\lambda_1 - \lambda_2)_+ \right] \underline{u}^k \right. \\
&\quad \left. - \frac{1}{\lambda_2} \left[ (\chi_2\lambda_2\mu_2 - \chi_1\lambda_1\mu_1)_+ + \chi_1\mu_1(\lambda_1 - \lambda_2)_+ \right] \bar{u}^k \right\}^{\frac{1}{\gamma - 1}}.
\end{aligned}
\end{equation}
Using \eqref{4.7} and \eqref{4.13}, we conclude
\begin{equation*}
\left( b + \chi_2\mu_2 - \chi_1\mu_1 - \frac{1}{\lambda_2} \left[ |\chi_2\lambda_2\mu_2 - \chi_1\lambda_1\mu_1| + \chi_1\mu_1|\lambda_1 - \lambda_2| \right] \right) (\bar{u}^{\gamma - 1} - \underline{u}^{\gamma - 1}) \leq 0.
\label{eq:inequality-condition} 
\end{equation*}
It indicates that \( \underline{u} = \bar{u} \). Together with \eqref{4.7} and \eqref{4.13}, it can be concluded that
$ \underline{u} = \bar{u} = \left( \frac{a}{b} \right)^{\frac{1}{\gamma - 1}}$.

(2) Assume \( b + \chi_2\mu_2 - \chi_1\mu_1 - \frac{1}{\lambda_2} \left[ |\chi_1\mu_1\lambda_1 - \chi_2\mu_2\lambda_2| + \chi_2\mu_2|\lambda_1 - \lambda_2| \right] > 0 \).

By the straight-forward calculation, we can obtain that
\begin{align*}
(\chi_2\lambda_2 w - \chi_1\lambda_1 v)(x,t) &\leq \frac{1}{\lambda_1} \left[ (\chi_2\lambda_2\mu_2 - \chi_1\lambda_1\mu_1)_+ + \chi_2\mu_2(\lambda_1 - \lambda_2)_+ \right] (\bar{u} + \varepsilon)^k \\
&\quad - \frac{1}{\lambda_1} \left[ (\chi_2\lambda_2\mu_2 - \chi_1\lambda_1\mu_1)_+ + \chi_2\mu_2(\lambda_1 - \lambda_2)_+ \right] (\underline{u} - \varepsilon)^k
\end{align*}
and
\begin{align*}
(\chi_2\lambda_2 w - \chi_1\lambda_1 v)(x,t) &\geq \frac{1}{\lambda_1} \left[ (\chi_2\lambda_2\mu_2 - \chi_1\lambda_1\mu_1)_+ + \chi_2\mu_2(\lambda_1 - \lambda_2)_+ \right] (\underline{u} - \varepsilon)^k \\
&\quad - \frac{1}{\lambda_1} \left[ (\chi_2\lambda_2\mu_2 - \chi_1\lambda_1\mu_1)_+ + \chi_2\mu_2(\lambda_1 - \lambda_2)_+ \right] (\bar{u} + \varepsilon)^k.
\end{align*}
Similarly, it is easy to get
\begin{align}\label{4.17}
\bar{u} &\leq \frac{1}{\left( b + \chi_2\mu_2 - \chi_1\mu_1 \right)^{\frac{1}{\gamma - 1}}} \left\{ a + \frac{1}{\lambda_1} \left[ (\chi_2\lambda_2\mu_2 - \chi_1\lambda_1\mu_1)_+ + \chi_2\mu_2(\lambda_1 - \lambda_2)_+ \right] \bar{u}^k \right. \nonumber\\
&\quad \left. - \frac{1}{\lambda_1} \left[ (\chi_2\lambda_2\mu_2 - \chi_1\lambda_1\mu_1)_+ + \chi_2\mu_2(\lambda_1 - \lambda_2)_+ \right] \underline{u}^k \right\}^{\frac{1}{\gamma - 1}}
\end{align}
and
\begin{equation}\label{4.18}
\begin{aligned}
\underline{u} &\geq \frac{1}{\left( b + \chi_2\mu_2 - \chi_1\mu_1 \right)^{\frac{1}{\gamma - 1}}} \left\{ a + \frac{1}{\lambda_1} \left[ (\chi_2\lambda_2\mu_2 - \chi_1\lambda_1\mu_1)_+ + \chi_2\mu_2(\lambda_1 - \lambda_2)_+ \right] \underline{u}^k \right. \\
&\quad \left. - \frac{1}{\lambda_1} \left[ (\chi_2\lambda_2\mu_2 - \chi_1\lambda_1\mu_1)_+ + \chi_2\mu_2(\lambda_1 - \lambda_2)_+ \right] \bar{u}^k \right\}^{\frac{1}{\gamma - 1}}.
\end{aligned}
\end{equation}
Combining estimates \eqref{4.17} and \eqref{4.18} yields
\[
\left( b + \chi_2\mu_2 - \chi_1\mu_1 - \frac{1}{\lambda_1} \left[ |\chi_2\lambda_2\mu_2 - \chi_1\lambda_1\mu_1| + \chi_2\mu_2|\lambda_1 - \lambda_2| \right] \right) (\bar{u}^{\gamma - 1} - \underline{u}^{\gamma - 1}) \leq 0.
\]
It implies that \( \underline{u} = \bar{u} \). This along with \eqref{4.17} and \eqref{4.18}, it can be concluded that
$\underline{u} = \bar{u} = \left( \frac{a}{b} \right)^{\frac{1}{\gamma - 1}}$.

\noindent When \( \gamma \neq k + 1 \).

Suppose that the parameters $\chi_i$, $\mu_i$ and $\lambda_i$ ($i=1,2$) satisfy
\[
\chi_2\mu_2 - \chi_1\mu_1 = 0, \, \lambda_1 - \lambda_2 = 0.
\]
Then we can deduce that $\chi_2\lambda_2 w - \chi_1\lambda_1 v = 0$.
Similar to proof of Case 1, we conclude that \( \underline{u} = \bar{u} = \left( \frac{a}{b} \right)^{\frac{1}{\gamma - 1}} \). This completes the proof.
$\hfill\Box$

\section{Spreading Speed}
We devote this section to exploring the propagation speed of the global classical solution to \eqref{1.1}.
Suppose that \( u_0 \in C_{unif}^b(\mathbb{R}^N) \), \( \inf\limits_{x \in \mathbb{R}^N} u_0(x) \geq 0 \), \( \alpha \in \left( \frac{1}{2}, 1 \right) \), \( N > 2\alpha \) and \( \chi_i, \mu_i, \lambda_i \) ($i=1,2$) are as defined in \cref{thm:mytheorem1.4}.

Let \( D_l = \left\{ x \in \mathbb{R}^N \mid |x_i| < l \text{ for } i = 1, 2, \ldots, N \right\} \)
and \( B^{N-1} = \left\{ x \in \mathbb{R}^N \mid |x| = 1 \right\} \). For any unit vector \( \xi \in B^{N-1}\) and constant \(c \in \mathbb{R} \), we introduce the shifted functions
\begin{equation*}
\left\{
\begin{aligned}
\hat{u}(x,t) = u(x + e^{c \xi t}, t),\\
\hat{v}(x,t) = v(x + e^{c \xi t}, t),\\
\hat{w}(x,t) = w(x + e^{c \xi t}, t)
\end{aligned}
\right.
\end{equation*}
and consider the following system
\begin{equation}\label{5.1}
\left\{
\begin{aligned}
\hat{u}_t &= -(-\Delta)^\alpha \hat{u} + c \xi e^{c \xi t} \cdot \nabla \hat{u}
- \chi_1 \nabla \cdot (\hat{u} \nabla \hat{v}) + \chi_2 \nabla \cdot (\hat{u} \nabla \hat{w})
+ a \hat{u} - b \hat{u}^\gamma, &x \in \mathbb{R}^N,\\
0 &= \Delta \hat{v}- \lambda_1\hat{v} + \mu_1 \hat{u}^k, &x \in \mathbb{R}^N,\\
0 &= \Delta \hat{w}- \lambda_2\hat{w} + \mu_2 \hat{u}^k, &x \in \mathbb{R}^N.
\end{aligned}
\right.
\end{equation}
Let \( (\hat{u}, \hat{v}, \hat{w}) \) be a classical solution of \eqref{5.1}
with $ \hat{u}(x,0) = u_0(x) $ and $ \inf_{x \in \mathbb{R}^N} u_0(x) \geq 0 $.
\begin{lemma}\label{le4.1.0}
For any \( \xi \in B^{N-1}\) and \(c \in \mathbb{R} \), there exist \( C_2 > 0\), \(\hat{T_0} \gg 1 \) and \( 0 < \omega < 1 \)
such that
\[
\begin{cases}
\|\hat{u}(\cdot, t)\|_{L^\infty} \leq C_0,\\
\|\hat{v}(\cdot, t)\|_{L^\infty}\leq \frac{\mu_1}{\lambda_1} C_0^k,~~ \|\hat{w}(\cdot, t)\|_{L^\infty} \leq \frac{\mu_2}{\lambda_2} C_0^k, \\
\|\nabla \hat{v}(\cdot, t)\|_{L^\infty}\leq \frac{\mu_1 \sqrt{N}}{\sqrt{\lambda_1}} C_0^k,~~ \|\nabla \hat{w}(\cdot, t)\|_{L^\infty} \leq \frac{\mu_2 \sqrt{N}}{\sqrt{\lambda_2}} C_0^k
\end{cases}
\]
for all $t\geq\hat{T_0}$ and
\begin{equation}\label{5.4}
\begin{aligned}
\Bigg\{\sup_{\substack{t, s \geq \hat{T}_0+1 \\ t \neq s}} \frac{\|\nabla \hat{v}(\cdot, t) - \nabla \hat{v}(\cdot, s)\|_{L^\infty}}{|t - s|^\omega}, \sup_{\substack{t, s \geq \hat{T}_0+1 \\ t \neq s}} \frac{\|\nabla \hat{w}(\cdot, t) - \nabla \hat{w}(\cdot, s)\|_{L^\infty}}{|t - s|^\omega}\Bigg\} \leq {C}_2,
\end{aligned}
\end{equation}
where the definition of \( C_0 \) is the same as that in \eqref{1.1} of \cref{thm:mytheorem1.2}.
\end{lemma}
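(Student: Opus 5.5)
The plan is to get the sup bounds directly from \cref{thm:mytheorem1.2} and the elliptic structure, and then the H\"older-in-time bound from the mild formulation of the $u$-equation. Since $\hat u(x,t)=u(x+e^{c\xi t},t)$ is only a spatial translate of $u$ at each fixed time, we have $\|\hat u(\cdot,t)\|_{L^\infty}=\|u(\cdot,t)\|_{L^\infty}\le C_0$ for all $t\ge0$. Likewise $\hat v,\hat w,\nabla\hat v,\nabla\hat w$ are translates of $v,w,\nabla v,\nabla w$.

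\textbf{Sup bounds.} We use the representation
\[
v=\mu_1(\lambda_1I-\Delta)^{-1}u^k=\mu_1\int_0^\infty\!\!\int_{\mathbb{R}^N}e^{-\lambda_1 s}(4\pi s)^{-N/2}e^{-|x-z|^2/4s}u^k(z,t)\,dz\,ds .
\]
Together with $0\le u^k\le C_0^k$, this gives $0\le v\le \frac{\mu_1}{\lambda_1}C_0^k$, and analogously for $w$. The gradient bounds follow from \cref{lemma:Lemma 2.5} applied to $u(\cdot,t)$, again using $\|u^k\|_{L^\infty}\le C_0^k$. All of these hold for every $t\ge0$, so any $\hat T_0$ works for this part.

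\textbf{H\"older continuity in time.} Because of the moving frame, $\nabla\hat v(\cdot,t)-\nabla\hat v(\cdot,s)$ contains both the time variation of $\nabla v$ and a spatial shift by $e^{c\xi t}-e^{c\xi s}$. I interpret the shift as in \cite{JLL} and \eqref{5.1}, i.e.\ essentially a translation along $\xi$, and treat it through spatial H\"older regularity of $\nabla v$. The core step is a H\"older estimate for $u$ in time with values in $X^\beta$. I would reuse the argument from Step 1 of the proof of \cref{thm:mytheorem1.2}, now on $[\hat T_0,\infty)$ with $\hat T_0\ge1$ and with constants uniform in $t$. Write
\[
u(t)=T(t-t_0)u(t_0)+\text{(Duhamel terms)},
\]
with $t_0=t-1$. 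Combining \eqref{0.0.1}, \eqref{0.0.2}, \cref{lemma:Lemma 2.1}, \cref{lemma:Lemma 2.5} and $\|u\|_{L^\infty}\le C_0$ then gives
\[
\sup_{t\ge \hat T_0}\|u(t)\|_{X^\beta}\le C,\qquad \|u(t+h)-u(t)\|_{X^\beta}\le C(h^\beta+h^{1-\beta-\frac1{2\alpha}})\quad (0<h\le1),
\]
for $0<\beta<\frac12-\frac1{4\alpha}$. The exponential factor $e^{-(t-\tau)}$ in \eqref{0.0.2} is what makes the constants independent of $t$. The embedding $X^\beta\hookrightarrow C^\nu$ then gives uniform spatial H\"older bounds as well as time H\"older bounds in $L^\infty$. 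Next, $|u^k(t)-u^k(s)|\le kC_0^{k-1}|u(t)-u(s)|$, and $\nabla(\lambda_1 I-\Delta)^{-1}$ is bounded on $L^\infty$ (as in \cref{lemma:Lemma 2.5}). Hence
\[
\|\nabla v(t)-\nabla v(s)\|_{L^\infty}\le C|t-s|^{\omega}
\]
for $|t-s|\le1$, and trivially $\le C|t-s|^\omega$ for $|t-s|>1$ by the sup bounds. The spatial shift contributes $\|\nabla v(\cdot+\delta,s)-\nabla v(\cdot,s)\|_{L^\infty}$, which is controlled by $C|\delta|^{\nu}$ from the $C^{1,\nu}$ regularity of $v$, itself coming from elliptic regularity with $u^k\in C^\nu$. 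The same argument applies to $w$. Choosing $\omega\in(0,1)$ as the minimum of the exponents yields \eqref{5.4}.

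\textbf{Main obstacle.} The delicate point is making every constant uniform for $t\ge\hat T_0+1$, both in the Duhamel estimates and in the treatment of the moving-frame shift. For the Duhamel estimates, restarting the formula at $t-1$ avoids the $t_1^{-\beta}$ singularity from the initial time. For the shift, which could grow with $t$, I would argue instead that \eqref{5.4} is really a statement about the time modulus of $\nabla v$ along the frame. I would verify it via the equation \eqref{5.1} for $\hat u$, treating the drift term $c\xi e^{c\xi t}\cdot\nabla\hat u$ as part of the generator, exactly as in \cite{JLL}.
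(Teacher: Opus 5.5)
Your sup bounds follow the paper's argument. You use translation invariance of the sup norm, the heat-kernel representation of $v=\mu_1(\lambda_1 I-\Delta)^{-1}u^k$ (which gives $\frac{\mu_1}{\lambda_1}C_0^k$), and the gradient estimate of \cref{lemma:Lemma 2.5}. Your time-H\"older estimate for the \emph{unshifted} gradient is also sound: Duhamel restarted at $t-1$, $X^\beta$ bounds made uniform by the factor $e^{-(t-\tau)}$, $|u^k(t)-u^k(s)|\le kC_0^{k-1}|u(t)-u(s)|$, and $L^\infty$-boundedness of $\nabla(\lambda_1 I-\Delta)^{-1}$. It is essentially a written-out version of Theorem 1.1(iii) of Jiang et al., which the paper simply cites for \eqref{5.4}.

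The gap is the moving frame, and neither of your two treatments closes it.
\begin{itemize}
\item \textbf{Spatial H\"older bound.} With $\delta=(e^{ct}-e^{cs})\xi$ you bound $\|\nabla v(\cdot+\delta,s)-\nabla v(\cdot,s)\|_{L^\infty}\le C|\delta|^\nu$. For $c>0$ the mean value theorem gives $|\delta|\ge c\,e^{c\min\{t,s\}}|t-s|$. So the resulting H\"older constant grows like $e^{\nu c t}$ and is not a $C_2$ uniform over $t,s\ge\hat T_0+1$.
\item \textbf{Why regularity alone cannot fix it.} Take $s=t-e^{-ct}$. Then $|\delta|\to c$ while $|t-s|^\omega=e^{-\omega c t}\to 0$. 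Using your unshifted estimate, the quotient in \eqref{5.4} stays bounded only if $\sup_y|\nabla v(y+\delta,s)-\nabla v(y,s)|$ (with $\delta\approx c\xi$) decays exponentially in $s$. Regularity of $v$ cannot supply that.
\item \textbf{Absorbing the drift.} Putting $ce^{ct}\xi\cdot\nabla$ into the generator changes nothing. For a spatially constant drift the evolution is just the fractional heat semigroup composed with the same translation. Equivalently, $\partial_t\nabla\hat v$ contains $ce^{ct}(\xi\cdot\nabla)\nabla v$, whose coefficient is unbounded in $t$. The same issue arises under the paper's componentwise notation $e^{c\xi t}$ in \eqref{5.1}.
\end{itemize}

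The missing ingredient is an exponential-in-time decay of the spatial oscillation of $\nabla v$, e.g.\ $[\nabla v(\cdot,t)]_{C^\nu}\le Ce^{-\rho t}$. With that, any $\omega\le\nu\le\rho/c$ works. Nothing in the paper provides such a rate: \cref{thm:mytheorem1.3} gives convergence without one.

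As written, your argument proves \eqref{5.4} only for $c\le 0$ (where $|\delta|\le |c||t-s|$) or for a linear frame $x+ct\xi$. The paper's one-line citation does not address this point either, since the cited estimate concerns the unshifted $\nabla v$.
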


\begin{proof}
By direct computations, we get
\begin{align*}
\|\hat{v}(\cdot, t)\|_{L^\infty} &= \mu_1 \int_0^\infty \int_{\mathbb{R}^N} \frac{e^{-\frac{|y - e^{c(t-s)\xi} - x|^2}{4(t - s)}}}{(4\pi(t - s))^{\frac{N}{2}}} \hat{u}^k(y, s) \, dy\, ds \\
&\leq \mu_1 \pi^{-\frac{N}{2}} \|\hat{u}^k(\cdot, t)\|_{L^\infty} \int_0^\infty e^{-\lambda_1 s} \, ds \int_{\mathbb{R}^N} e^{-|z|^2} \, dz \\
&\leq \frac{\mu_1}{\lambda_1} C_0^k
\end{align*}
and
\begin{align*}
\|\partial_{y_j} \hat{v}(\cdot, t)\|_{L^\infty} &= \mu_1 \int_0^\infty \int_{\mathbb{R}^N} \frac{(y_j - x_j - e^{c(t-s)\xi}) e^{-\lambda_1 s} e^{-\frac{|x + e^{c(t-s)\xi} - y|^2}{4(t - s)}}}{2(t - s)(4\pi(t - s))^{\frac{N}{2}}} \hat{u}^k(y, s) \, dy\, ds \\
&\leq \mu_1 \pi^{-\frac{N}{2}} \|\hat{u}^k(\cdot, t)\|_{L^\infty} \int_0^\infty e^{-\lambda_1 s} s^{-\frac{1}{2}} \, ds \int_{\mathbb{R}^N} |z| e^{-|z|^2} \, dz \\
&= \frac{\mu_1}{\sqrt{\lambda_1}} C_0^k,
\end{align*}
where \( \int_{\mathbb{R}^N} e^{-|x|^2} dx = \pi^{\frac{N}{2}} \) and
\( \int_{\mathbb{R}^N} x_i e^{-|x|^2} dx = \pi^{\frac{N-1}{2}} \).
Since \( |\nabla \hat{v}| = \sqrt{\sum_{j=1}^N |\partial_{x_j} \hat{v}|^2} \), it follows that
\[
\|\nabla \hat{v}\|_{L^\infty} \leq \frac{\mu_1 \sqrt{N}}{\sqrt{\lambda_1}} C_0^k.
\]
The estimate for $w$ follows similarly. Furthermore, \eqref{5.4} follows by arguments similar to those for Theorem 1.1(iii) in \cite{JLL}.
\end{proof}

\noindent \textbf{The proof of Theorem 1.3.} The proof is divided into three steps.

To this end, for fixed \( {\eta} \), we take \( T \geq 1 \) and choose \( L = L({\eta}) \geq l \)
such that \( D_l \subset B_L(0) \) and
\begin{equation}\label{5.3}
\begin{aligned}
\max\left\{
\int_{\mathbb{R}^N \setminus B_{\frac{L - e^{\frac{Ta}{N+2\alpha}}}{2\sqrt{T}}}(0)} e^{-|z|^2} \, dz, \,
\int_{\mathbb{R}^N \setminus B_{\frac{L - e^{\frac{Ta}{N+2\alpha}}}{2\sqrt{T}}}(0)} |z| e^{-|z|^2} \, dz
\right\}
\le {\eta}^k.
\end{aligned}
\end{equation}

\noindent \textbf{Step 1.} First, we give some preliminary estimates in Step 1, which are divided into Claim 1-Claim 3.

\noindent Claim 1. \textit{For any given \( 0 < \varepsilon < \frac{a}{2(N + 2\alpha)} \). There exists \( \varepsilon_0 > 0 \) such that for any \( 0 < \eta < \varepsilon_0 \), \( \xi \in B^{N-1} \), \( -\frac{a}{2(N + 2\alpha)} + \varepsilon \leq c \leq \frac{a}{2(N + 2\alpha)} - \varepsilon \) and any \( t_1, t_2 \) satisfying \( T(u_0) \leq t_1 < t_2 \leq +\infty \),  if
\[\sup_{x\in B_{2L(\eta)}} \hat{u}(x,t) \leq {\eta},\]
then
\[\sup_{x\in B_{2L(\eta)}} \max\left\{\hat{v}(x,t), \hat{w}(x,t), \left|\partial_{x_j} \hat{v}(x,t)\right|, \left|\partial_{x_j} \hat{w}(x,t)\right|\right\} \leq M_1 {\eta}^k \]
for all $t_1 \leq t < t_2$,
where \( B_{2L(\eta)} \) is any ball in $\mathbb{R}^N$ with radius \( 2L(\eta) \) and
\begin{align*}
M_1 := \max\left\{ \frac{\mu_i}{\lambda_i} + \frac{\mu_i C_0^k}{\lambda_i \pi^{\frac{N}{2}}}, \frac{\mu_i}{\sqrt{\lambda_i}} + \frac{\mu_i C_0^k}{\sqrt{\lambda_i}} \pi^{\frac{1-N}{2}} \right\} ~~\text{ for } i = 1,2 \text{ and } j = 1,2,\ldots,N.
\end{align*}}

Fix $t_1 \geq T(u_0)$. By a simple calculation, we can get
\begin{align}\label{5.3.1}
\|\hat{v}(x,t)\|_{L^\infty} &= \mu_1 \int_{t_1}^t \int_{\mathbb{R}^N} \frac{e^{-\lambda_1 (t-s)}}{(4\pi (t-s))^{\frac{N}{2}}} e^{-\frac{|x + e^{c(t-s)\xi} - y|^2}{4(t-s)}} \hat{u}^k(y,s) \, dy \, ds \nonumber\\
&= \frac{\mu_1}{\pi^{\frac{N}{2}}} \int_{t_1}^t e^{-\lambda_1 (t-s)} ds \int_{\mathbb{R}^N} e^{-|z|^2} \hat{u}^k ( x + e^{c(t-s)\xi} + 2\sqrt{t-s} \, z, s) dz \nonumber\\
&\leq \frac{\mu_1}{\pi^{\frac{N}{2}}} \|\hat{u}^k(\cdot,t)\|_{L^\infty} \int_{t_1}^t e^{-\lambda_1 (t-s)} ds \int_{\mathbb{R}^N} e^{-|z|^2} dz \nonumber\\
&\quad + \mu_1 C_0^k \int_{t_1}^t e^{-\lambda_1 (t-s)} ds \int_{\mathbb{R}^N \setminus B_{\frac{L - e^{\frac{Ta}{N+2\alpha}}}{2\sqrt{T}}}} e^{-|z|^2} dz \nonumber\\
&\leq \left( \frac{\mu_1}{\lambda_1} + \frac{\mu_1 C_0^k}{\lambda_1 \pi^{\frac{N}{2}}} \right) {\eta}^k
\end{align}
and
\begin{align}\label{5.3.2}
\|\partial_{x_j} \hat{v}(x,t)\|_{L^\infty} &= \mu_1 \int_{t_1}^t \int_{\mathbb{R}^N} \frac{(x_j + e^{c(t-s)\xi}_j - y_j) e^{-\lambda_1 (t-s)} e^{-\frac{|x + e^{c(t-s)\xi} - y|^2}{4(t-s)}}}{2(t-s) (4\pi (t-s))^{\frac{N}{2}}} \hat{u}^k(y,s) \, dy \, ds \nonumber\\
&= \frac{\mu_1}{\pi^{\frac{N}{2}}} \int_{t_1}^t e^{-\lambda_1 (t-s)} \sqrt{t-s} \, ds \int_{\mathbb{R}^N} z_j e^{-|z|^2} \hat{u}^k ( x + e^{c(t-s)\xi} + 2\sqrt{t-s} \, z, s ) dz \nonumber\\
&\leq \frac{\mu_1}{\pi^{\frac{N}{2}}}\sup_{t_1 \leq t \leq t_2,|x| \leq 2L} \hat{u}^k(x,t) \int_{t_1}^t e^{-\lambda_1 (t-s)} {(t-s)}^{-\frac{1}{2}} \, ds \int_{\mathbb{R}^N} |z| e^{-|z|^2} dz \nonumber\\
&\quad + \mu_1 C_0^k \int_{t_1}^t e^{-\lambda_1 (t-s)} {(t-s)}^{-\frac{1}{2}} \, ds \int_{\mathbb{R}^N \setminus B_{\frac{L - e^{\frac{Ta}{N+2\alpha}}}{2\sqrt{T}}}} |z| e^{-|z|^2} dz \nonumber\\
&\leq \left( \frac{\mu_1}{\sqrt{\lambda_1}} + \frac{\mu_1 C_0^k}{\sqrt{\lambda_1}} \pi^{\frac{1-N}{2}} \right) {\eta}^k
\end{align}
for $i = 1, 2$ and $j = 1, 2, \ldots, N$.
By \eqref{5.3.1} and \eqref{5.3.2}, we can obtain
\[
\sup_{t_1 \leq t \leq t_2 , |x| \leq 2L} \max \left\{ \hat{v}(x,t), \partial_{x_j} \hat{v}(x,t) \right\} \leq \max \left\{ \left( \frac{\mu_1}{\lambda_1} + \frac{\mu_1 C_0^k}{\lambda_1 \pi^{\frac{N}{2}}} \right) {\eta^k}, \left( \frac{\mu_1}{\sqrt{\lambda_1}} + \frac{\mu_1 C_0^k}{\sqrt{\lambda_1}} \pi^{\frac{1 - N}{2}} \right) {\eta^k} \right\}.
\]
The corresponding estimate for $w$ can be derived analogously.

\noindent Claim 2.
\textit{For any given \( 0 < \varepsilon < \frac{a}{2(N + 2\alpha)} \). Let \( \tilde{T}_0 \geq 1 \) be such that \( e^{-\lambda_1^\alpha \tilde{T}_0} \geq 4 \), here \( \lambda_1^\alpha \) does not depend on \( \tilde{T}_0 \). For any \( 0 < \eta < \varepsilon_0 \), there is \( 0 < \delta_\eta < \varepsilon_0 \) such that for any \( \xi \in B^{N-1} \), \( -\frac{a}{2(N + 2\alpha)} + \varepsilon \leq c \leq \frac{a}{2(N + 2\alpha)} - \varepsilon \) and \( t_0 \geq T(u_0) + 2 \), if
\[
\sup_{x \in B_{2L(\eta)}} \hat{u}(x, t) \geq \eta,
\]
then
\[
\inf_{x \in B_{2L(\eta)}} \hat{u}(x, t) \geq \delta_{\eta}
\]
for all $t_0 \leq t < t_0 + T(\eta) + \tilde{T}_0$.}

Since the proof is similar to that of Lemma 6.2 in \cite{JLL}, we omit it here.

To prove Claim 3, we first introduce the eigenvalue problem for the following system
\begin{equation}\label{5.1.1}
\begin{cases}
\begin{aligned}
&(-\Delta)^\alpha u(x) = \lambda^\alpha u(x), && x \in D_l, \\
&u(x) = 0, && x \in \mathbb{R}^N \setminus D_l.
\end{aligned}
\end{cases}
\end{equation}
As $N>2\alpha$, we can derive that the principal eigenvalue \( \lambda_1^\alpha \) of \eqref{5.1.1} is positive and simple,
with the associated principal eigenfunction \( \phi_1(x) \) being positive throughout $D_l$.
Based on these conclusions, we consider the subsequent eigenvalue problem
\begin{equation}\label{5.2}
\begin{cases}
\begin{aligned}
&(-\Delta)^\alpha u(x) - c \xi e^{\xi \tilde{T}_0} \cdot \nabla u(x) - \bar{a} u(x) = \lambda^\alpha u(x), \quad &&x \in D_l, \\
&u(x)= 0, \quad &&x \in \mathbb{R}^N \setminus D_l,
\end{aligned}
\end{cases}
\end{equation}
where \( \tilde{T}_0 \geq 1 \) satisfies \( e^{-\lambda_1^\alpha \tilde{T}_0} \geq 4 \).
Next, we define the fractional Sobolev space \( H_0^\alpha(D_l) \) equipped with the norm
\begin{align}\label{5.2.1}
    \| u \|_{H_0^\alpha(D_l)} = \left( \int_{Q} \frac{|u(x) - u(y)|^2}{|x - y|^{N + 2\alpha}} dxdy \right)^{\frac{1}{2}}
\end{align}
and \( {Q} = \mathbb{R}^{2N} \setminus ((\mathbb{R}^N \setminus D_l) \times (\mathbb{R}^N \setminus D_l)) \). As we can know from \cite{JLL}, \( H_0^\alpha(D_l) \) is a Hilbert space and
\begin{align} \label{5.2.2}
 \langle s, z \rangle_{H_0^\alpha(D_l)} = \int_{Q} \frac{(s(x) - s(y))(z(x) - z(y))}{|x - y|^{N + 2\alpha}} dxdy .
\end{align}
Given that \( s \) and \( z \) belong to \( H_0^\alpha(D_l) \) (implying \( s, z = 0 \) a.e. in \( \mathbb{R}^N \setminus D_l \)), \eqref{5.2.1} and \eqref{5.2.2} can be extended to the entire space \( \mathbb{R}^{2N} \).
Then we focus on the following weak formulation of \eqref{5.2}
\begin{equation}\label{5.2.3}
\begin{cases}
\begin{aligned}
&~~~~\frac{1}{2} \int_{\mathbb{R}^N} \frac{(u(x) - u(y))(\phi(x) - \phi(y))}{|x - y|^{N + 2\alpha}} dxdy - \int_{D_l} c \xi e^{c \xi  \tilde{T}_0} \cdot \nabla u(x) \phi(x) dx - \int_{D_l} \bar{a} u(x) \phi(x) dx \\
&= \lambda^\alpha (c, \bar{a}) \int_{D_l} u(x) \phi(x) dx, \,  \quad\text{for all } \phi(x) \in H_0^\alpha(D_l), \\
&~~~~u(x) \in H_0^\alpha(D_l),
\end{aligned}
\end{cases}
\end{equation}
which combined with Lemma 6.1 of \cite{JLL}, we conclude that the principal eigenvalue \( \lambda_1^\alpha(c, \bar{a}) < 0 \)
of \eqref{5.2.3} when \( \lambda_1^\alpha < \bar{a} < a \).
Moreover, the negativity of the eigenvalue \( \lambda_1^\alpha(c, \bar{a}) < 0 \) ensures that the sub-solution of the system increases over time, which also supports the proof of persistence and the lower bound of the spreading speed.

\noindent Claim 3.
\textit{For any given \( 0 < \varepsilon < \frac{a}{2(N + 2\alpha)} \). Assume \( \lambda_1^\alpha < \bar{a} < a \) and take \( l \) as in \eqref{5.3}. We can find \( 0 < \tilde{\varepsilon}_0 \leq \varepsilon_0 \) satisfying for every \( 0 < \eta < \tilde{\varepsilon}_0 \), \( \xi \in B^{N-1} \), \( -\frac{a}{2(N + 2\alpha)} + \varepsilon \leq c \leq \frac{a}{2(N + 2\alpha)} - \varepsilon \) and any \( t_1, t_2 \) satisfying \( T(u_0) \leq t_1 < t_2 \leq +\infty \), if
\begin{align*}
\sup_{x \in B_{2L(\eta)}} \hat{u}(x, t_1) = \eta, \quad \sup_{x \in B_{2L(\eta)}} \hat{u}(x, t) \leq \eta,
\end{align*}
then
\begin{align*}
\inf_{x \in B_{2L(\eta)}} \hat{u}(x, t) \geq \tilde{\delta}_\eta
\end{align*}
for all $t_1 \leq t < t_2$.}

Define \( \underline{u}(x,t) \) as the solution to the ensuing system
\[
\begin{cases}
\underline{u}_t = -(-\Delta)^\alpha \underline{u} + c \xi e^{c t \xi} \cdot \nabla \underline{u} + q(x,t) \cdot \nabla \underline{u} + \bar{a} \underline{u}, & x \in D_l, t > 0, \\
\underline{u}(x,t) = 0, & x \in \partial D_l, t > 0, \\
\underline{u}(x,0) = \phi_1(x; \xi, c, \bar{a}), & x \in D_l,
\end{cases}
\]
where \( \phi_1(x; \xi, c, \bar{a}) = \frac{\phi_1}{\|\phi_1\|_{L^\infty}} \), where \( \phi_1 \) is the principal eigenfunction.

We assert that there exists some \( \tilde{\varepsilon}_0 > 0 \) such that, for every function \( q(x,t) \) that is \( C^1 \) in the spatial variable \( x \) and H\"{o}lder continuous in time \( t \) with exponent \( 0 < \theta < 1 \),
\begin{equation}\label{5.5}
\begin{aligned}
\sup_{t > 0} \|q(\cdot, t)\|_{C(D_l)} \leq \left( \chi_1+ \chi_2 \right) \sqrt{N}{M_2}\tilde{\varepsilon}_0^k
\end{aligned}
\end{equation}
and
\begin{equation}\label{5.6}
\begin{aligned}
\sup_{t > 0} \|q(\cdot, t)\|_{C^\theta} \leq (\chi_1 + \chi_2) C_2,
\end{aligned}
\end{equation}
then
\[
\underline{u}(x, \tilde{T}_0) \geq 2\phi_1(x).
\]
Following the argument  in Lemma 6.5 of \cite{JLL}, we prove this claim by contradiction and omit the details here.

Without loss of generality, we may assume that
\[
a - \chi_1\lambda_1 M_2 \tilde{\varepsilon}_0^k - b \tilde{\varepsilon}_0^{\gamma - 1} + (\chi_1\mu_1 - \chi_2\mu_2) \tilde{\varepsilon}_0^k \geq \bar{a}.
\]
Let \( T = T(\eta) \), it follows from Claim 1 that
\[
\begin{aligned}
\hat{u}_t &= -(-\Delta)^\alpha \hat{u} + c \xi e^{c t \xi} \cdot \nabla \hat{u} + (\nabla \chi_2 \hat{w} - \nabla \chi_1 \hat{v}) \cdot \nabla \hat{u} \\
&~\quad + \hat{u} \left( a - b \hat{u}^{\gamma - 1} + (\chi_2\mu_2 - \chi_1\mu_1) \hat{u}^k \right) \\
&\geq -(-\Delta)^\alpha \hat{u} + c \xi e^{c t \xi} \cdot \nabla \hat{u} + (\nabla \chi_2 \hat{w} - \nabla \chi_1 \hat{v}) \cdot \nabla \hat{u} + \bar{a} \hat{u}
\end{aligned}
\]\\
for any \( 0 < \eta < \tilde{\varepsilon}_0 \), \( \xi \in B^{N-1} \), \( -\frac{a}{2(N + 2\alpha)} + \varepsilon \leq c \leq \frac{a}{2(N + 2\alpha)} - \varepsilon \), \( t_1 + T + 1 \leq t < t_2 \leq +\infty \) and \( x \in B_L(0) \), where $q(x, t) = \nabla \chi_2 \hat{w} - \nabla \chi_1 \hat{v}$. By Claim 1 and Claim 2, we can deduce that $q(\cdot, t_1 + T +1)$ satisfies \eqref{5.5} and \eqref{5.6}.

Let \( n_0 \geq 0 \) be such that $t_1 + T + 1 + n_0 \tilde{T}_0 < t_2$ and $t_1 + T + 1 + (n_0 + 1)\tilde{T}_0 \geq t_2$, it follows from Claim 2 that
\[
\inf_{\substack{x \in B_{2L(0)} \\ }} \hat{u}(x,t) \geq \delta_{\eta}
\]
for all $t_1 < t < t_1 + T + 1$.
Combining the above claim with the comparison principle, we can derive
\[
\hat{u}(x, t_1 + T + 1 + m \hat{T}_0; \xi, c, u_0) \geq 2^m \delta_{\eta} \phi_1(x; \xi, c, \bar{a})~~ \text{ for }  m = 1, 2, \ldots, n_0
\]
for all $x \in D_l$.
Similar to the proof in Claim 2, we have $\inf_{x \in B_{2L}(0)} \hat{u}(x, t) \geq \delta_{\delta_{\eta}}$ for all $t_1 < t < t_1 + T + 1 + (m + 1)\hat{T}_0$.
Thus, for any \(\xi \in B^{N - 1}\) and \(-\frac{a}{2(N + 2\alpha)} + \varepsilon \leq c \leq \frac{a}{2(N + 2\alpha)} - \varepsilon\), we can deduce that
\[
\inf_{x \in B_{2L}(0)} \hat{u}(x, t) \geq \tilde{\delta}_{\eta} := \min\{\delta_{\eta}, \delta_{\delta_{\eta}}\}
\]
for all $t_1 < t < t_2$.

\noindent \textbf{Step 2.} Next, we demonstrate the lower bound of the spreading speed.

Assume that \(\tilde{\delta} := \tilde{\delta}(\xi, c) = \inf_{x \in \bar{D}_l} \hat{u}(T_0 + T(\tilde{\varepsilon}_0) + 1, \xi, c, u_0)\), \(u_0(x) > 0\), \(\tilde{\delta} > 0\), \(m_0 = \inf \{ m \in \mathbb{Z}^+ \mid 2^m \tilde{\delta} \geq \tilde{\varepsilon}_0 \}\) and \(T_{00} = T_0 + T(\tilde{\varepsilon}_0) + 1 + m_0 \tilde{T}_0\).

Next, we claim that for any \( \xi \in B^{N-1} \) and \( -\frac{a}{2(N + 2\alpha)} + \varepsilon \leq c \leq \frac{a}{2(N + 2\alpha)} - \varepsilon \), we have
\begin{equation}\label{5.7}
\begin{aligned}
\inf_{\substack{|x| \leq 2L(\tilde{\varepsilon}_0) \\}} \hat{u}(x,t) \geq \min\{\delta_{\tilde{\varepsilon}_0}, \tilde{\delta}_{\tilde{\varepsilon}_0}\}
\end{aligned}
\end{equation}
for all $t \geq T_{00}$.

To this end, we first introduce an open set \( E = \left\{ t > T_0 \,\big|\, \sup_{\substack{|x| \leq 2L(\tilde{\varepsilon}_0)}} \hat{u}(x, t) \leq \tilde{\varepsilon}_0 \right\} \), it follows from Claim 2 in Step 1 that
\begin{equation}\label{5.8}
\begin{aligned}
\inf_{\substack{|x| \leq 2L(\tilde{\varepsilon}_0)}} \hat{u}(x, t) \geq \delta_{\tilde{\varepsilon}_0}
\end{aligned}
\end{equation}
for all $t \notin E$. We shall discuss the claim by dividing it into two cases.

\noindent Case 1. \( E \) is an empty set.

From Claim 2 in Step 1, we have
\begin{equation}\label{5.9}
\begin{aligned}
\inf_{\substack{|x| \leq 2L({\tilde{\varepsilon}_0}) \\}} \hat{u}(x,t) \geq \delta_{\tilde{\varepsilon}_0}
\end{aligned}
\end{equation}
for all $t \geq T_0$.

\noindent Case 2. \( E \) is a non-empty set.

Suppose that \( E = \bigcup_{i} (E_i, E_{i + 1}) \), if \( E_i \neq T_0 \), then
\[
\sup_{\substack{|x| \leq 2L(\eta) \\}} \hat{u}(x,E_{i}) = \tilde{\varepsilon}_0, \,~~ \sup_{\substack{|x| \leq 2L(\eta) \\}} \hat{u}(x,t) \leq \tilde{\varepsilon}_0
\]
for all $t \in (E_i, E_{i + 1})$.
It follows from Claim 3 in Step 1 that
\begin{equation}\label{5.10}
\begin{aligned}
\inf_{\substack{|x| \leq 2L(\delta_{\tilde{\varepsilon}_0}) \\}} \hat{u}(x,t) \geq \tilde{\delta}_{\tilde{\varepsilon}_0} ~~\text{ for } E_i \neq T_0
\end{aligned}
\end{equation}
for all $t \in (E_i, E_{i + 1})$.
If \( E_i = T_0 \), by Claim 3 in Step 1, we get
\[
\hat{u}(x, T_0 + T(\tilde{\varepsilon}_0) + 1 + m \tilde{T}_0; \xi, c, u_0) \geq 2^m \delta_{\eta} \phi_1(x; \xi, c, \bar{a}) ~~~\text{for }  m = 0, 1, 2, \ldots, m_0
\]
for all $x \in D_l$.
Thus, we can obtain that \( E_{i + 1} \leq T_0 \). The combination of \eqref{5.8}-\eqref{5.10} yields \eqref{5.7}.
Combining this with \( \hat{u}(x,t) = u(x + e^{c t \xi}, t) \), for any \( t \geq T_0 \), \( \xi \in B^{N - 1} \) and \( -\frac{a}{2(N + 2\alpha)} + \varepsilon \leq c \leq \frac{a}{2(N + 2\alpha)} - \varepsilon \), we can deduce that
\begin{align*}
\inf_{\substack{|x - e^{c t \xi}| \leq 2L(\tilde{\varepsilon}_0) \\}} u(x,t) \geq \min\left\{ \delta_{\tilde{\varepsilon}_0}, \tilde{\delta}_{\tilde{\varepsilon}_0} \right\}.
\end{align*}
Therefore, for any \( t \geq T_0 \) and \( |x| \leq e^{\left( \frac{a}{N + 2\alpha} - \varepsilon \right) t} \), if \( x > 0 \), then there exist \( c = \frac{\ln |x|}{t} \), and \( \xi = \frac{\ln x}{\ln |x|} \) such that \( |x - e^{c t \xi}| \leq 2L(\tilde{\varepsilon}_0) \). If \( x < 0 \), then there exist \( c = \frac{\ln |x|}{t} \), and \( \xi = \frac{\ln (-x)}{\ln |x|} \) such that \( |x - e^{c t \xi}| \leq 2L(\tilde{\varepsilon}_0) \). And if \( x = 0 \), the conclusion holds obviously. Then, for any \( t \geq T_{00} \), we obtain that
\[
\inf_{\substack{|x| \leq e^{\left( \frac{a}{N + 2\alpha} - \varepsilon \right) t} \\}} u(x,t) \geq \min\left\{ \delta_{\tilde{\varepsilon}_0}, \tilde{\delta}_{\tilde{\varepsilon}_0} \right\},
\]
it immediately yields
\[
\liminf_{t \to \infty} \inf_{\substack{|x| \leq e^{\left( \frac{a}{N + 2\alpha} - \varepsilon \right) t} \\}} u(x,t) \geq \min\left\{ \delta_{\tilde{\varepsilon}_0}, \tilde{\delta}_{\tilde{\varepsilon}_0} \right\}.
\]
\textbf{Step 3.} In this step, we prove the upper bound of the spreading speed.

According to the \eqref{1.1}, we have
\begin{align}\label{5.11}
u_t &\leq -(-\Delta)^\alpha u + \nabla (\chi_2 w - \chi_1 v) \cdot \nabla u + u \left( a + M C_0^k - b u^{\gamma - 1} + (\chi_1 \mu_1 - \chi_2 \mu_2) u^k \right).
\end{align}
{Case 1.} \( \gamma = k + 1 \).

When \( \gamma = k + 1 \), \eqref{5.11} takes the form
\[
u_t \leq -(-\Delta)^\alpha u + \nabla (\chi_2 w - \chi_1 v) \cdot \nabla u + u \left( a + M C_0^k - \left( b + \chi_2\mu_2 - \chi_1\mu_1 \right) u^k \right).
\]
Since \( b + \chi_2\mu_2 - \chi_1\mu_1 > M \), we have
\[
u_t \leq -(-\Delta)^\alpha u + \nabla (\chi_2 w - \chi_1 v) \cdot \nabla u + u \left( a + M C_0^k \right).
\]
Let \( \bar{U}(x,t) \) be the solution of the following equation
\[
\begin{cases}
\bar{U}_t + (-\Delta)^\alpha \bar{U} + \nabla (\chi_1 v - \chi_2 w) \cdot \nabla \bar{U} = (a + M C_0^k) \bar{U}, & x \in \mathbb{R}^N, t > 0, \\
\bar{U}(x, 0) = u_0, & x \in \mathbb{R}^N.
\end{cases}
\]
By applying the comparison principle, we get $0 \leq u(x,t) \leq \bar{U}(x,t)$
for all $x \in \mathbb{R}^N$ and $t > 0$.

Let \( \bar{W} = e^{-(a + M C_0^k) t} \bar{U}, {V} = e^{-(a + M C_0^k) t} v, {W} = e^{-(a + M C_0^k) t} w\), we obtain that
\[
\begin{cases}
\bar{W}_t + (-\Delta)^\alpha \bar{W} + e^{(a + M C_0^k) t} \nabla (\chi_1 v - \chi_2 w) \cdot \nabla \bar{W} = 0, & x \in \mathbb{R}^N, t > 0, \\
\bar{W}(x, 0) = u_0, & x \in \mathbb{R}^N.
\end{cases}
\]
From \cref{def: Dedinition 1} and Lemma \ref{le4.1.0}, we derive that
\begin{align*}
K_{|e^{(a + M C_0^k) t} \nabla (\chi_1 v - \chi_2 w)|}^{2\alpha}(r) &= \sup_{x \in \mathbb{R}^N} \int_{B_r(x)} \frac{|\nabla (\chi_1 v - \chi_2 w)|}{|x - y|^{N + 1 - 2\alpha}} dy \\
&\leq \left( \frac{\chi_1 \mu_1}{\sqrt{\lambda_1}} + \frac{\chi_2 \mu_2}{\sqrt{\lambda_2}} \right) \sqrt{N} C_0^k \sup_{x \in \mathbb{R}^N} \int_{B_r(x)} \frac{1}{\tau^{N + 1 - 2\alpha}} \tau^{N - 1} d\tau \\
&\leq \left( \frac{\chi_1 \mu_1}{\sqrt{\lambda_1}} + \frac{\chi_2 \mu_2}{\sqrt{\lambda_2}} \right) \sqrt{N} C_0^k \frac{1}{2\alpha - 1} r^{2\alpha - 1}.
\end{align*}
Since \( \alpha \in \left( \frac{1}{2}, 1 \right) \), then we have \( \lim_{r \to 0} K_{|e^{(a + M C_0^k) t} \nabla (\chi_1 v - \chi_2 w)|}^{2\alpha}(r) = 0 \).\\
By Theorem 1.1 in \cite{JLL}, for any \( x, y \in \mathbb{R}^N \) and \( 0 < t \leq T \), we can obtain that
\begin{align*}
\bar{U}(x,t) = e^{(a + M C_0^k) t} \int_{\mathbb{R}^N} P_b^\alpha(x, y, t) u_0(y) dy
\end{align*}
and
\begin{align*}
\frac{t}{C_3 \left( t^{\frac{1}{2\alpha}} + |x - y| \right)^{N + 2\alpha}} \leq P_b^\alpha(x, y, t) \leq \frac{C_3 t}{\left( t^{\frac{1}{2\alpha}} + |x - y| \right)^{N + 2\alpha}},
\end{align*}
with \( P_b^\alpha(x, y, t) \) representing the fundamental solution corresponding to the operator \( -(-\Delta)^\alpha - e^{(a + M C_0^k)t}\nabla(\chi_1 V - \chi_2 W) \cdot \nabla \).
Let \( r_0 \geq 1 \), By the similar reason in \cite{JLL}, we conclude that
\begin{align*}
u(x,t) \leq \bar{U}(x,t) \leq C^* \left( \frac{r_0^{-2\alpha} t}{N} + 1 \right) e^{(a+M C_0^k) t} |x|^{-N + 2\alpha}.
\end{align*}
Then, by selecting a sufficiently large $t$ and \( |x| \geq e^{\left( \frac{a+M C_0^k}{N + 2\alpha} + \varepsilon \right) t} \), it follows that
\begin{align*}
u(x,t) \leq C^* \left( \frac{r_0^{-2\alpha} t}{N} + 1 \right) e^{-\varepsilon t} \to 0 \quad \text{as }  t \to \infty .
\end{align*}
{Case 2.} \( \gamma \neq k + 1 \).

When \( \gamma \neq k + 1 \), by applying similar arguments as in Case 1, for $ |x| \geq e^{\left( \frac{a}{N + 2\alpha} + \varepsilon \right) t}$, we obtain
\[
u(x,t) \leq C^* \left( \frac{r_0^{-2\alpha} t}{N} + 1 \right) e^{-\varepsilon t} \to 0 \quad \text{as } t \to \infty.
\]
The proof of Theorem \ref{thm:mytheorem1.4} is thus completed.
$\hfill\Box$

\section*{Conflicts of Interest}
Authors have no conflict of interest to declare.






\end{document}